\theoremstyle{plain}
\numberwithin{theorem}{section}
\numberwithin{equation}{section}
\newcommand{\average}{{\mathchoice {\kern1ex\vcenter{\hrule height.4pt
width 6pt depth0pt} \kern-9.7pt} {\kern1ex\vcenter{\hrule
height.4pt width 4.3pt depth0pt} \kern-7pt} {} {} }}
\def\R{\mathbb{R}}
\renewcommand{\d}{\delta }
\newcommand{\D }{\Delta }
\newcommand{\G }{\Gamma}
\newcommand{\n }{\nabla }
\newcommand{\vp }{\varphi }
\renewcommand{\O }{\Omega }
\newcommand{\be}{\begin{equation}}
\newcommand{\ee}{\end{equation}}
\newcommand{\de}{\partial}
\renewcommand{\dim}{{\rm dim}\,}
\renewcommand{\epsilon}{\varepsilon}
\newcommand{\Ds}{ (-\D)^s}
\theoremstyle{plain}
\newtheorem{thm}{\protect\theoremname}
\theoremstyle{remark}
\newtheorem{rem}[thm]{\protect\remarkname}
\theoremstyle{plain}
\newtheorem{lem}[thm]{\protect\lemmaname}
\theoremstyle{plain}
\newtheorem{prop}[thm]{\protect\propositionname}
\theoremstyle{plain}
\newtheorem{cor}[thm]{\protect\corollaryname}
\providecommand{\corollaryname}{Corollary}
\providecommand{\lemmaname}{Lemma}
\providecommand{\propositionname}{Proposition}
\providecommand{\remarkname}{Remark}
\providecommand{\theoremname}{Theorem}
\begin{document}

\title[Generic properties of eigenvalues of the fractional Laplacian]{Generic properties of   eigenvalues of the fractional Laplacian}

\author{Mouhamed Moustapha Fall}
\address[Mouhamed M. Fall]{African Institute for Mathematical Sciences Senegal, KM2 Route de Joal Mbour 1418}
\email{mouhamed.m.fall@aims-senegal.org}

\author{Marco Ghimenti}
\address[Marco Ghimenti]{Dipartimento di Matematica
Universit\`a di Pisa
Largo Bruno Pontecorvo 5, I - 56127 Pisa, Italy}
\email{marco.ghimenti@unipi.it }

\author{Anna Maria Micheletti}
\address[Anna Maria Micheletti]{Dipartimento di Matematica
Universit\`a di Pisa
Largo Bruno Pontecorvo 5, I - 56127 Pisa, Italy}
\email{a.micheletti@dma.unipi.it }

\author{Angela Pistoia}
\address[Angela Pistoia] {Dipartimento SBAI, Universit\`{a} di Roma ``La Sapienza", via Antonio Scarpa 16, 00161 Roma, Italy}
\email{angela.pistoia@uniroma1.it}

\thanks{The last three authors are partially supported by the group GNAMPA of Istituto Nazionale di Alta Matematica (INdAM). The second author is partially supported by the GNAMPA project ``Modelli nonlineari in presenza di interazioni puntuali"}

\begin{abstract}
We consider the  Dirichlet  eigenvalues of the fractional Laplacian $(-\Delta)^s$, with $s\in (0,1)$, related to a smooth bounded 
domain $\O$. We prove that there exists an arbitrarily small perturbation $\tilde\O=(I+\psi)(\O)$ of the original 
domain such that all  Dirichlet  eigenvalues of the  fractional Laplacian associated to $\tilde\O$ are simple.  As a consequence we obtain that all Dirichlet eigenvalues of the fractional Laplacian on an interval are simple.
In addition, we prove that for a generic choice of parameters all the eigenvalues of some non-local operators are also simple.
 \end{abstract}

\keywords{Eigenvalues, fractional Laplacian, generic properties, simplicity}

\subjclass{35J60, 58C15}
\maketitle

\section{Introduction and statement of the result}

The present paper is concerned with the Dirichlet eigenvalue fractional problem

\begin{equation}
(-\Delta)^{s}\varphi  =\lambda\varphi \text{ in }\Omega,\qquad 
\varphi =0   \text{ in } \mathbb{R}^{n}\smallsetminus\Omega.\label{eq:Pb-0}
\end{equation}

Here $\Omega$ is a bounded $C^{1,1}$ domain in $\mathbb{R}^{n}$ with $n\geq 1$ and
 $(-\Delta)^s $ with $s\in(0,1)$ is the  fractional Laplacian defined, for $u\in C^2_c(\R^n)$, as
\[
(-\Delta)^{s}u=C_{n,s} {\mathtt {P.V.}}\int_{\mathbb{R}^{n}}\frac{u(x)-u(y)}{|x-y|^{n+2s}}dx=C_{n,s}\lim_{\varepsilon\rightarrow0^{+}}\int_{\mathbb{R}^{n}\smallsetminus B_{\varepsilon}(x)}\frac{u(x)-u(y)}{|x-y|^{n+2s}}dx,
\]
where $C_{n,s}:=s4^{s}\frac{\Gamma(s+n/2)}{\pi^{n/2}\Gamma(1-s)}$
is a renormalization constant
and $B_{\varepsilon}(x)$ is the ball
of radius $\varepsilon$ centered in $x$.

To avoid a priori regularity assumptions, we consider the eigenvalue
problem in a weak sense. We consider the space 
\[
\mathcal{H}_{0}^{s}(\Omega):=\left\{ u\in H^{s}(\mathbb{R}^{n})\ :\ u\equiv0\text{ on }\Omega^{c}\right\} ,
\]
where 
\[
H^{s}(\mathbb{R}^{n}):=\left\{ u\in L^{2}(\mathbb{R}^{n})\ :\ \frac{u(x)-u(y)}{|x-y|^{\frac{n}{2}+s}}\in L^{2}(\mathbb{R}^{n}\times\mathbb{R}^{n})\right\} .
\]
On $\mathcal{H}_{0}^{s}(\Omega)$ we consider the quadratic form 
\[
(u,v)\mapsto\mathcal{E}_{s}^{\Omega}(u,v):=\frac{C_{n,s}}{2}\int_{\mathbb{R}^{n}}\int_{\mathbb{R}^{n}}\frac{(u(x)-u(y))(v(x)-v(y))}{|x-y|^{n+2s}}dxdy.
\]
Then, we call $\varphi_{s}\in\mathcal{H}_{0}^{s}(\Omega)$ an eigenfunction
corresponding to the eigenvalue $\lambda$ if 
\[
\mathcal{E}_{s}^{\Omega}(\varphi_{s},v)=\lambda\int_{\mathbb{R}^{n}}\varphi_{s}vdx\ \ \forall v\in\mathcal{H}_{0}^{s}(\Omega).
\]
In the following, to simplify notation, we will omit the renormalization constant $C_{n,s}$.

It is well known (see e.g. \cite{BRS} and the reference therein for
an exhaustive introduction about these topics) that (\ref{eq:Pb})
admits an ordered sequence of eigenvalues 
\[
0<\lambda_{1,s}<\lambda_{2,s}\le\lambda_{3,s}\le\dots\le\lambda_{1,s}\le\dots\rightarrow+\infty.
\]

Since the first eigenvalue is strictly positive, we can endow $\mathcal{H}_{0}^{s}(\Omega)$
with the norm
\[
\|u\|_{\mathcal{H}_{0}^{s}(\Omega)}^{2}=\mathcal{E}_{s}^{\Omega}(u,u).
\]

In the local case, i.e. $s=1$, it is well known (see \cite{M73,M})  that all the eigenvalues are simple for {\em generic} domains $\Omega$.

It is natural to ask if the same results hold true in the non-local case, i.e. $s\in(0,1)$. As far as we know, there are only two results dealing with the simplicity issue.   Very recently, in \cite{DFW}
the authors prove the
simplicity of radial eigenvalues 
in a ball or an annulus.
In \cite{KWM,K},  the authors prove that all the eigenvalues of  the fractional Laplacian    $(-\Delta)^s$ with $s\in [1/2,1)$ in 
  the interval $\Omega=(-1,1)$ are simple.  However, to our knowledge,  the simplicity eigenvalues on an interval  for all $s\in (0,1)$ remains an open problem.  The present paper solves this open question, as a consequence of our main result.

To study domain perturbations we will consider the space
\[
C^{1}(\mathbb{R}^{n},\mathbb{R}^{n}):=\left\{ \psi:\mathbb{R}^{n}\rightarrow\mathbb{R}^{n}\ :\ \psi^{(i)}\text{ continuous and bounded, }i=0,1\right\} 
\]
endowed with the norm 
\[
\|\psi\|_{1}=\sup_{x\in\mathbb{R}^{n}}\max_{i=0,1}|\psi^{(i)}(x)|.
\]
The first question is: if $\bar{\lambda}$ is an eigenvalue of multiplicity
$\nu>1$ of the operator $(-\Delta)_{\Omega}^{s}$ associated with
the domain $\Omega$ with Dirichlet boundary condition, and $U$ is
an interval such that the intersection of the spectrum of $(-\Delta)_{\Omega}^{s}$
with $U$ consist of the only number $\bar{\lambda}$, there exists
a perturbation $\Omega_{\psi}=(I+\psi)(\Omega)$ of the domain $\Omega$ such
that the intersection of the spectrum of $(-\Delta)_{\Omega_{\psi}}^{s}$
with the interval $U$ consists exactly of $\nu$ simple eigenvalues
of $(-\Delta)_{\Omega_{\psi}}^{s}$? Consequently, a second question arises: there exists
a perturbed domain $\Omega_{\psi}=(I+\psi)(\Omega)$ such that {\em all} the eigenvalues of 
 $(-\Delta)_{\Omega_{\psi}}^{s}$ are  simple.

The answer is affirmative and our main result reads as follows.

\begin{thm}
Let  $\Omega$ be a smooth bounded domain with $C^{1,1}$ boundary.  Then for any $\varepsilon >0$ 
there exists $\psi \in C^{1}(\mathbb{R}^{n},\mathbb{R}^{n})$, with $\|\psi\|_{C^1}<\varepsilon$,
 such that all the eigenvalues of  the problem
$$(-\Delta)^{s}\varphi  =\lambda\varphi   \text{ in }\Omega_\psi=(I+\psi)(\Omega),\qquad
\varphi =0   \text{ in } \mathbb{R}^{n}\smallsetminus\Omega_\psi
$$
are simple.

\label{thm:main-0}

\end{thm}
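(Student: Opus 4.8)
The plan is to adapt the classical scheme for generic simplicity of eigenvalues under domain deformations (Albert, Uhlenbeck, Micheletti--Pistoia) to the non-local setting; the two genuinely non-local ingredients are a Hadamard-type domain-variation formula for $(-\Delta)^s$ and a unique continuation principle from the boundary, and the second of these is where the argument is delicate. First I would reduce Theorem~\ref{thm:main-0} to a \emph{local splitting} statement: if $\bar\lambda$ is a Dirichlet eigenvalue of $(-\Delta)^s$ on a bounded $C^{1,1}$ domain $\Omega$ of multiplicity $\nu>1$, and $U$ is an open interval whose intersection with the spectrum on $\Omega$ is exactly $\{\bar\lambda\}$, then there exist arbitrarily small $\psi\in C^1(\mathbb R^n,\mathbb R^n)$ for which the spectrum on $\Omega_\psi$ inside $U$ consists of exactly $\nu$ simple eigenvalues. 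Granting this, the theorem follows from a Baire category argument in the complete metric space $\overline{B_\varepsilon(0)}\subset C^1(\mathbb R^n,\mathbb R^n)$: for each $k$ the set $\mathcal A_k:=\{\psi:\lambda_k(\Omega_\psi)<\lambda_{k+1}(\Omega_\psi)\}$ is open by continuous dependence of the eigenvalues on $\psi$, and dense — given any $\psi_0$ one first moves by a $C^1$-small perturbation to a $C^\infty$ deformation, so that $\Omega_\psi$ is again $C^{1,1}$ and the boundary behaviour $\varphi\asymp\dist(\cdot,\partial\Omega)^s$ of eigenfunctions is available, and then applies the local splitting to the cluster containing $\lambda_k$ — so $\bigcap_k\mathcal A_k$ is a dense $G_\delta$ and any of its elements yields a $\psi$ for which all eigenvalues on $\Omega_\psi$ are simple.

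To prove the local splitting I would pull the problem on $\Omega_\psi$ back to the fixed space $\mathcal H^s_0(\Omega)$ via the change of variables $x=(I+\psi)(\xi)$. Since $(-\Delta)^s$ is defined through the bilinear form $\mathcal E^\Omega_s$, the eigenvalue problem on $\Omega_\psi$ becomes the generalized problem $a_\psi(u,v)=\lambda\int_\Omega u\,v\,J_\psi$ on $\mathcal H^s_0(\Omega)$, where $J_\psi=|\det(I+D\psi)|$ and $a_\psi$ has kernel $|(I+\psi)(\xi)-(I+\psi)(\eta)|^{-n-2s}J_\psi(\xi)J_\psi(\eta)$; both data depend analytically on $\psi$ near $0$, so this defines a self-adjoint operator $A_\psi$ depending analytically on $\psi$ with $A_0=(-\Delta)^s_\Omega$. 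By Kato's analytic perturbation theory the spectral projection onto the part of the spectrum near $\bar\lambda$ varies analytically and the $\nu$ eigenvalues of $A_\psi$ lying in $U$ are exactly those of a symmetric $\nu\times\nu$ matrix $M(\psi)$, analytic in $\psi$, with $M(0)=\bar\lambda\,\mathrm{Id}$. It therefore suffices to find arbitrarily small $\psi$ for which $M(\psi)$ has $\nu$ distinct eigenvalues; since $\psi\mapsto M(\psi)$ is analytic, this is guaranteed once $DM(0)$ is onto the space of symmetric $\nu\times\nu$ matrices, because the map is then open near $0$ and every neighbourhood of $\bar\lambda\,\mathrm{Id}$ contains matrices with simple spectrum.

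Next I would compute $DM(0)$. Using a Pohozaev-type identity for $(-\Delta)^s$ (in the spirit of Ros-Oton--Serra) together with its version for general deformation fields — equivalently, the Hadamard formula for the mixed Dirichlet--Neumann problem for $\div(y^{1-2s}\nabla\,\cdot\,)$ in the Caffarelli--Silvestre extension — one obtains, for $L^2$-orthonormal eigenfunctions $\varphi_1,\dots,\varphi_\nu$ of $\bar\lambda$ and $V\in C^1(\mathbb R^n,\mathbb R^n)$,
\[
\big(DM(0)[V]\big)_{ij}=-c_{n,s}\int_{\partial\Omega}\Psi_i\,\Psi_j\,(V\cdot\nu)\,d\sigma,\qquad 1\le i,j\le\nu,
\]
where $\Psi_i(x):=\Gamma(1+s)\lim_{t\to0^+}t^{-s}\varphi_i\big(x-t\,\nu(x)\big)$ is the fractional normal derivative of $\varphi_i$ on $\partial\Omega$. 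Since $V\cdot\nu|_{\partial\Omega}$ ranges over all of $C^1(\partial\Omega)$, surjectivity of $DM(0)$ is equivalent to the linear independence on $\partial\Omega$ of the family $\{\Psi_i\Psi_j:1\le i\le j\le\nu\}$.

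This last point is the heart of the matter and the step I expect to be the main obstacle; it is also where $s\in(0,1)$ forces arguments beyond the classical case $s=1$. Suppose $\sum_{i\le j}c_{ij}\Psi_i\Psi_j\equiv0$ on $\partial\Omega$. Diagonalizing the symmetric matrix $(c_{ij})$ by an orthogonal change of basis inside the eigenspace (which preserves $L^2$-orthonormality and the linearity of $\varphi\mapsto\Psi$), one reduces to a relation $\sum_i\mu_i\Psi_i^2\equiv0$ with $(\mu_i)\neq0$. When the $\mu_i$ are of one sign this forces $\Psi_i\equiv0$ on $\partial\Omega$ whenever $\mu_i\neq0$, and then a boundary unique continuation principle for $(-\Delta)^s$ — equivalently for the degenerate operator $\div(y^{1-2s}\nabla\,\cdot\,)$, using that $\varphi_i\sim\Gamma(1+s)^{-1}\Psi_i\,\dist(\cdot,\partial\Omega)^s$ near $\partial\Omega$ — forces $\varphi_i\equiv0$, a contradiction. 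The indefinite case is the genuinely delicate one: the relation $\sum_i\mu_i\Psi_i^2\equiv0$ with mixed signs must be excluded by combining the boundary unique continuation principle with the structure of the nodal sets of the $\Psi_i$ on the (connected) boundary $\partial\Omega$, or equivalently with analyticity and connectedness arguments for the extension; carrying this out carefully is the technical core. Once $DM(0)$ is surjective, the local splitting follows from the openness in the second paragraph and Theorem~\ref{thm:main-0} follows from the Baire argument of the first; moreover, in dimension $n=1$ the same boundary analysis already precludes any eigenvalue of multiplicity $\nu>1$ — a nontrivial combination of eigenfunctions whose fractional normal derivative vanishes at an endpoint would contradict unique continuation — which yields the simplicity of all eigenvalues of $(-\Delta)^s$ on any interval.
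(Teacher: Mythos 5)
Your reduction to a local splitting statement, the pullback to a $\psi$-dependent bilinear form on the fixed space $\mathcal{H}^s_0(\Omega)$, and the Hadamard-type formula expressing the derivative of the restricted eigenvalue problem as boundary integrals of $\Psi_i\Psi_j\,(V\cdot\nu)$ (with $\Psi_i$ the fractional normal derivative of $\varphi_i$) are all in line with the paper's argument. However, the route you take from there has a genuine gap, which the paper's approach avoids. You aim for \emph{surjectivity} of $DM(0)$ onto the symmetric $\nu\times\nu$ matrices, which is equivalent to linear independence of $\{\Psi_i\Psi_j:1\le i\le j\le\nu\}$ on $\partial\Omega$; you correctly note that the indefinite case (a relation $\sum_i\mu_i\Psi_i^2\equiv 0$ on $\partial\Omega$ with coefficients of mixed sign, e.g.\ $\Psi_1^2\equiv\Psi_2^2$ when $\nu=2$) is not dispatched by the argument you sketch, and you explicitly leave it open, calling it ``the technical core.'' That is a real missing step: a relation $\Psi_1^2\equiv\Psi_2^2$ on $\partial\Omega$ carries no obvious contradiction, and none of the unique-continuation or nodal-set arguments you gesture at is carried out, so as written the proposal does not prove the local splitting.

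The paper circumvents this difficulty by using a condition strictly weaker than surjectivity. By the abstract result of Theorem~\ref{thm:astratto} (Micheletti's manifold-of-Fredholm-operators argument), if the eigenvalue $\bar\lambda$ remained of multiplicity $\nu$ for \emph{all} small $\psi$, one would have $\langle T'(0)[\psi]\varphi_j,\varphi_i\rangle=\rho(\psi)\delta_{ij}$, i.e.\ the projected derivative is a scalar matrix in every direction. Inserting the Hadamard formula and using the arbitrariness of $\psi\cdot N$ on $\partial\Omega$ then yields simultaneously $\Psi_i\Psi_j\equiv 0$ on $\partial\Omega$ for $i\ne j$ \emph{and} $\Psi_1^2\equiv\cdots\equiv\Psi_\nu^2$ on $\partial\Omega$. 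These two relations together force $\Psi_i\equiv 0$ on $\partial\Omega$ pointwise (at each boundary point $|\Psi_1|=|\Psi_2|$ and $\Psi_1\Psi_2=0$), with no unique continuation needed at all. The contradiction is then supplied directly by the fractional Pohozaev identity of Ros-Oton--Serra: $\Gamma^2(1+s)\int_{\partial\Omega}\Psi_i^2\,x\cdot N\,d\sigma=2s\lambda_0\|\varphi_i\|_{L^2}^2\ne 0$, which forbids $\Psi_i\equiv 0$ on the whole of $\partial\Omega$. So the ``boundary UCP'' you need in your definite case is in fact exactly this Pohozaev identity, and the indefinite case you cannot handle simply does not arise in the paper's framework. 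Two minor remarks: your Baire-category organization of the global step is a legitimate alternative to the paper's explicit iteration and $C^1$-limit argument; and the corollary for intervals in the paper is obtained by a scaling/translation argument from the existence of one interval with simple spectrum, not by the direct endpoint unique-continuation argument you suggest, which is itself unjustified.
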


In other words, it can be said that all the eigenvalues of  the problem (\ref{eq:Pb-0})
are simple for {\em generic} domains $\Omega$, where with generic we mean that, 
given a domain $\Omega$, there exists at least an arbitrarily close domain 
$\tilde{\Omega}=(I+\psi)\Omega$  for which all eigenvalues of (\ref{eq:Pb-0}) are simple.  As a consequence of  Theorem \ref{thm:main-0}, we obtain the simplicity of eigenvalues of the fractional laplacian on intervals.
\begin{cor}\label{cor:intervals}
Let $s\in (0,1)$. Then all eigenvalues of  the eigenvalue  problem
$$(-\Delta)^{s}\varphi  =\lambda\varphi \quad   \text{ in }(-1,1),\qquad
\varphi =0  \quad   \text{ in } \mathbb{R}\smallsetminus(-1,1)
$$
are simple.
\end{cor}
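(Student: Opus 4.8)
The plan is to deduce Corollary \ref{cor:intervals} directly from Theorem \ref{thm:main-0}, using the fact that in dimension $n=1$ a small $C^1$ perturbation of $(-1,1)$ is again a bounded interval, on which the spectrum of $(-\Delta)^s$ differs from the spectrum on $(-1,1)$ only by an explicit scaling factor.

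First I would observe that $\Omega=(-1,1)\subset\mathbb{R}$ is a smooth bounded domain (its boundary is the two-point set $\{-1,1\}$), so Theorem \ref{thm:main-0} applies: fixing any $\varepsilon\in(0,1)$, there exists $\psi\in C^{1}(\mathbb{R},\mathbb{R})$ with $\|\psi\|_{C^1}<\varepsilon$ such that every Dirichlet eigenvalue of $(-\Delta)^s$ on $\Omega_\psi=(I+\psi)((-1,1))$ is simple. Since $\|\psi'\|_\infty<\varepsilon<1$, the map $t\mapsto t+\psi(t)$ has positive derivative, hence is a strictly increasing $C^1$-diffeomorphism of $\mathbb{R}$; consequently $\Omega_\psi$ is an open interval $(a,b)$ with $a=-1+\psi(-1)$ and $b=1+\psi(1)$, and $b>a$.

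Next I would record the affine invariance of the eigenvalue problem. For any bounded interval $(a,b)$, the affine map $T(y)=\frac{a+b}{2}+\frac{b-a}{2}\,y$ sends $(-1,1)$ onto $(a,b)$; since $(-\Delta)^s$ is translation invariant and homogeneous of degree $2s$ under dilations, the pullback $v\mapsto v\circ T^{-1}$ is a linear bijection from $\mathcal{H}_0^s((-1,1))$ onto $\mathcal{H}_0^s((a,b))$ which, by a direct change of variables in the form $\mathcal{E}_s^\Omega$ and in the $L^2$ pairing, carries an eigenfunction of $(-\Delta)^s$ on $(-1,1)$ with eigenvalue $\lambda$ to an eigenfunction on $(a,b)$ with eigenvalue $\big(2/(b-a)\big)^{2s}\lambda$. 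Hence the eigenvalues of $(-\Delta)^s$ on $(a,b)$ are exactly $\big(2/(b-a)\big)^{2s}$ times those on $(-1,1)$, with the same multiplicities.

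Combining the two observations finishes the argument: for every $k$, the $k$-th Dirichlet eigenvalue of $(-\Delta)^s$ on $(-1,1)$ has the same multiplicity as the $k$-th eigenvalue on $\Omega_\psi=(a,b)$, which equals $1$ by the choice of $\psi$. I do not expect any genuine obstacle here; the only points needing care are the elementary verification that $I+\psi$ maps $(-1,1)$ onto an interval and the bookkeeping of constants in the change of variables, the substantive content being entirely contained in Theorem \ref{thm:main-0}.
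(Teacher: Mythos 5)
Your proposal is correct and follows essentially the same route as the paper: apply Theorem \ref{thm:main-0} to obtain a perturbed interval with simple eigenvalues, then transfer back to $(-1,1)$ by the affine (translation plus dilation) change of variables, under which eigenvalue multiplicities are preserved. You merely spell out the details the paper leaves implicit, namely that $I+\psi$ with $\|\psi\|_{C^1}<1$ is strictly increasing so that $\Omega_\psi$ is again an interval, and the explicit $2s$-homogeneous scaling of the eigenvalues.
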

 Corollary \ref{cor:intervals}  follows from  Theorem \ref{thm:main-0} which implies that there exists an open interval $\tilde\O$ (a perturbation of an open bounded interval $\O$) such that all its Dirichlet eigenvalues are simple. Since the dimension of the eigenspaces are invariant under scaling and translation,  Corollary \ref{cor:intervals}  follows immediately.\\

In the  spirit of Theorem \ref{thm:main-0}, we obtain a similar result considering Dirichlet eigenvalue fractional problem with nonconstant coefficients of the type

\begin{equation}
(-\Delta)^{s}\varphi +a(x)\varphi =\lambda\varphi \text{ in }\Omega,\qquad 
\varphi =0   \text{ in } \mathbb{R}^{n}\smallsetminus\Omega\label{eq:Pb}
\end{equation}
and
\begin{equation}
 (-\Delta)^{s}\varphi =\lambda \alpha(x)\varphi  \text{ in }\Omega ,\qquad
\varphi =0   \text{ in } \mathbb{R}^{n}\smallsetminus\Omega
,\label{eq:Pb-1}
\end{equation}

where $a,\alpha\in C^0(\mathbb R^n)$. Again, if $(-\Delta)^s +a(x)I$ is a positive operator (e.g. $\min _{\overline\Omega}a>0$ or $\|a\|_{C^0(\Omega)}$ is small enough) or $\min_{\overline\Omega} \alpha>0$,  
from a (fractional analogue) of Rellich's compactness lemma it is quite standard to deduce that there is an unbounded ordered sequence of eigenvalues $(\lambda_i)_{i\in\mathbb N}$
(see \cite{BRS,F} and the references therein) and that each eigenvalue has finite multiplicity and the first one is simple. 

In the local case, simplicity of the eigenvalues with respect to a perturbation of the coefficients where proved in \cite{U} and we are able to show the nonlocal counterpart of this result. In particular, 
we prove that all the eigenvalues of \eqref{eq:Pb} and \eqref{eq:Pb-1} are simple for {\em generic}  functions $a$  and $\alpha $, respectively, in this two results.
\begin{thm}
\label{main}
Let $a\in C^{0}(\mathbb{R}^{n})$ such that $\min_{\overline\Omega}a>0$ or
  $\|a\|_{C^{0}(\Omega)}$ is small enough. For any $\varepsilon>0$ 
  there exists $b\in C^{0}(\mathbb{R}^{n})$, with $\|b\|_{C^0}<\varepsilon$, 
  such that all the eigenvalues of  the problem
$$(-\Delta)^{s}\varphi +\left(a(x)+b(x)\right)\varphi =\lambda\varphi   \text{ in }\Omega,\qquad
\varphi =0   \text{ in } \mathbb{R}^{n}\smallsetminus\Omega
$$
are simple.
\end{thm}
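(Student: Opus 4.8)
The plan is to run a Baire-category (genericity) argument of Uhlenbeck type. Since the hypothesis on $a$ is stable under small $C^{0}$ perturbations, we may fix $r>0$ so small that for every $b$ in the open ball $X:=\{b\in C^{0}(\mathbb R^{n}):\|b\|_{C^{0}}<r\}$ the operator $L_{b}:=(-\Delta)^{s}+(a+b)I$ on $\mathcal H^{s}_{0}(\Omega)$ is positive with compact resolvent; write $\lambda_{1}(b)\le\lambda_{2}(b)\le\cdots\to+\infty$ for its eigenvalues, repeated with multiplicity (only $b|_{\Omega}$ matters here). For $k\in\mathbb N$ set
\[
\mathcal A_{k}:=\bigl\{\,b\in X:\ \lambda_{1}(b)<\lambda_{2}(b)<\cdots<\lambda_{k}(b)<\lambda_{k+1}(b)\,\bigr\},
\]
the set of $b$ for which the first $k$ eigenvalues of $L_{b}$ are simple. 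The map $b\mapsto(a+b)I$ is Lipschitz from $C^{0}$ into the bounded self-adjoint operators on $L^{2}(\Omega)$, so each $\lambda_{j}(\cdot)$ is continuous on $X$ (by the min--max characterization, or by continuity of the spectral projections); hence each $\mathcal A_{k}$ is open. As $X$ is an open subset of the Banach space $C^{0}(\mathbb R^{n})$, it is a Baire space, so it is enough to prove that every $\mathcal A_{k}$ is dense: picking $b\in\bigcap_{k\ge1}\mathcal A_{k}$ arbitrarily close to $0$ --- so that $\|b\|_{C^{0}}<\varepsilon$ --- yields an operator all of whose eigenvalues are simple, which is the assertion.

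To prove density, fix $b_{0}\in X$. Examining the finitely many values $\lambda_{1}(b_{0}),\dots,\lambda_{k}(b_{0})$ and inducting on the number of repetitions among them, it suffices to show that whenever $\bar\lambda$ is an eigenvalue of $L_{b_{0}}$ of multiplicity $\nu\ge2$ one can find an arbitrarily small $\beta\in C^{0}(\mathbb R^{n})$ that splits $\bar\lambda$ into $\nu$ simple eigenvalues while leaving the already-simple eigenvalues among the first $k$ simple (which is automatic for small $\beta$, by openness and the spectral gap). First I would apply analytic perturbation theory to the holomorphic family of type~(A) $t\mapsto L_{b_{0}}+tM_{\beta}$, where $M_{\beta}$ is the (bounded, self-adjoint) operator of multiplication by $\beta$: by the Kato--Rellich theory, for small $|t|$ the spectrum near $\bar\lambda$ consists of $\nu$ analytic branches $\lambda_{i}(t)=\bar\lambda+t\mu_{i}+O(t^{2})$, where $\mu_{1}\le\cdots\le\mu_{\nu}$ are the eigenvalues of the symmetric matrix
\[
B(\beta):=\Bigl(\int_{\Omega}\beta\,\varphi_{i}\varphi_{j}\,dx\Bigr)_{1\le i,j\le\nu},
\]
$\{\varphi_{1},\dots,\varphi_{\nu}\}$ being an $L^{2}(\Omega)$-orthonormal basis of the $\bar\lambda$-eigenspace of $L_{b_{0}}$. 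Thus if $B(\beta)$ has $\nu$ \emph{distinct} eigenvalues, the branches separate for small $t\ne0$ and $\bar\lambda$ splits into $\nu$ simple eigenvalues. So the density step reduces to showing that the linear map $\beta\mapsto B(\beta)$ from $C^{0}(\mathbb R^{n})$ to the space $\mathrm{Sym}(\nu)$ of symmetric $\nu\times\nu$ matrices is \emph{surjective}: then the $\beta$ for which $B(\beta)$ has a repeated eigenvalue form the preimage, under this open surjection, of a proper real-algebraic subvariety of $\mathrm{Sym}(\nu)$ (the discriminant locus), hence are nowhere dense, and arbitrarily small admissible $\beta$ with $B(\beta)$ of simple spectrum exist.

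This surjectivity is where the argument really bites, and I expect it to be the main obstacle. It is equivalent to the linear independence in $L^{1}(\Omega)$ of the products $\{\varphi_{i}\varphi_{j}:1\le i\le j\le\nu\}$, since the annihilator of the range of $\beta\mapsto B(\beta)$ is exactly $\{C=(c_{ij})\ \text{symmetric}:\ \sum_{i,j}c_{ij}\varphi_{i}\varphi_{j}\equiv0\ \text{a.e. in }\Omega\}$ (a nonzero continuous function on $\Omega$ cannot be $L^{2}$-orthogonal to every $\beta\in C^{0}$, and eigenfunctions are continuous in the interior). Unlike the local case there is no product rule for $(-\Delta)^{s}$, so one cannot differentiate such a relation; instead one must argue directly, invoking the strong unique continuation property for eigenfunctions of $(-\Delta)^{s}$ (via the Caffarelli--Silvestre extension and Almgren-type monotonicity, in the spirit of Fall--Felli and R\"uland), namely that a nonzero eigenfunction vanishes at most on a Lebesgue-null set. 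For $\nu=2$, diagonalizing $C$ in an orthonormal basis $\{\psi_{1},\psi_{2}\}$ of the eigenspace turns the relation into $d_{1}\psi_{1}^{2}+d_{2}\psi_{2}^{2}\equiv0$; if $d_{1}d_{2}\ge0$ then some $\psi_{i}$ vanishes a.e., which is absurd, while if $d_{1}>0>d_{2}$ then $|\psi_{2}|=\gamma|\psi_{1}|$ with $\gamma>0$, so $\psi_{2}/\psi_{1}\in\{\gamma,-\gamma\}$ is constant on each connected component of the open dense set $\{\psi_{1}\ne0\}$, whence the eigenfunction $\psi_{2}\mp\gamma\psi_{1}$ vanishes on a nonempty open set and so, by unique continuation, vanishes identically --- contradicting the independence of $\psi_{1},\psi_{2}$. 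The general case follows the same pattern: diagonalize $C$, separate the indices according to the sign of the diagonal entries, and argue on connected components as above. Granting this, every $\mathcal A_{k}$ is dense and Theorem~\ref{main} follows. (The same unique-continuation ingredient underlies Theorem~\ref{thm:main-0}, where the elementary derivative $\int_{\Omega}\beta\,\varphi_{i}\varphi_{j}$ is replaced by a Hadamard-type boundary shape derivative, which makes that case the technically heavier one.)
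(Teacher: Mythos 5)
Your overall scaffold (Baire category plus a first-order splitting lemma via Kato--Rellich analytic perturbation theory) is a legitimate and recognizable route, and you correctly identify the matrix $B(\beta)=\bigl(\int_\Omega\beta\,\varphi_i\varphi_j\bigr)_{ij}$ as the object that controls the first-order splitting. But the proposal has a genuine gap exactly where you suspect it: you reduce density of $\mathcal A_k$ to surjectivity of $\beta\mapsto B(\beta)$ onto $\mathrm{Sym}(\nu)$, equivalently to $L^1$-linear independence of the products $\{\varphi_i\varphi_j\}_{i\le j}$. For $\nu=2$ your unique-continuation argument works. For $\nu\ge3$ the sketch ("diagonalize, separate signs, argue on components") does not go through as written: after diagonalization one is left with a relation $\sum_{i\in I_+}d_i\psi_i^2=\sum_{j\in I_-}|d_j|\psi_j^2$ between two sums of squares of eigenfunctions, and the pointwise ratio trick that made one eigenfunction a constant multiple of another on a component has no obvious analogue. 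Since $(-\Delta)^s$ has no product rule, the classical route (apply the operator to the relation and iterate, or invoke interior real-analyticity) is also blocked, as you yourself observe. So the key technical lemma you need is not established.

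What makes the paper's proof work without this is that it never needs surjectivity of $B$. The abstract splitting theorem (Theorem~\ref{thm:astratto}, after Micheletti) yields only a \emph{necessary condition} for the eigenvalue to remain of multiplicity $\nu$ under all small perturbations: that $\int_\Omega b\,\varphi_i\varphi_j=\rho(b)\,\delta_{ij}$ for every $b$, i.e.\ the range of $B$ lies in $\mathrm{span}(I)$. Contradicting \emph{this} is elementary and needs no unique continuation: it forces $\varphi_1\varphi_2\equiv0$ and $\varphi_1^2\equiv\varphi_2^2$ a.e., whence $\varphi_1\equiv\varphi_2\equiv0$, absurd. The payoff of this weaker conclusion is that one only splits the eigenvalue into $k>1$ clusters of total multiplicity $\nu$, not immediately into $\nu$ simple eigenvalues; the paper then iterates finitely many times inside a spectral gap (Corollary~\ref{cor:auto-semplice}), and then infinitely many times with a geometrically decaying sequence $\sigma_l$ to produce a single convergent perturbation $b=\sum_l b_l$. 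You could repair your proposal by replacing "surjective" with the weaker claim "range of $B$ not contained in $\mathrm{span}(I)$" (provable as above) and then iterating inside your density step instead of invoking the discriminant locus in one shot; as it stands, the discriminant argument requires the stronger unestablished claim. One small additional remark: your closing parenthetical is off --- Theorem~\ref{thm:main-0} in the paper rests on the fractional Pohozaev identity of Ros-Oton--Serra and Djitte--Fall--Weth, not on unique continuation.
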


\begin{thm}
\label{main-1}Let $\alpha\in C^{0}(\mathbb{R}^{n})$ such that $\min_{\overline\Omega}\alpha>0$.  
For any $\varepsilon>0$ there exists $\beta\in C^{0}(\mathbb{R}^{n})$, 
with $\|\beta\|_{C^0}<\varepsilon$, such that all the eigenvalues of  the problem
$$(-\Delta)^{s}\varphi  =\lambda \left(\alpha(x)+\beta(x)\right)\varphi  \text{ in }\Omega,\qquad
\varphi =0   \text{ in } \mathbb{R}^{n}\smallsetminus\Omega
$$
are simple.
\end{thm}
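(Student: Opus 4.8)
The plan is to follow the classical Uhlenbeck-type strategy for genericity of simple eigenvalues, adapted to the weighted fractional problem, and to regard the eigenvalue problem \eqref{eq:Pb-1} as a problem on a fixed Hilbert space in which the weight $\alpha+\beta$ enters the spectral operator analytically. First I would fix the domain $\Omega$ and work in $\mathcal H^s_0(\Omega)$ with the inner product $\mathcal E_s^\Omega$. For a continuous weight $\gamma$ with $\min_{\overline\Omega}\gamma>0$, the bilinear form $(u,v)\mapsto\int_\Omega \gamma u v$ is bounded on $\mathcal H^s_0(\Omega)$ and, by the compact embedding $\mathcal H^s_0(\Omega)\hookrightarrow L^2(\Omega)$ (the fractional Rellich lemma referenced in the excerpt), it defines a compact, self-adjoint, positive operator $T_\gamma:\mathcal H^s_0(\Omega)\to\mathcal H^s_0(\Omega)$ via $\mathcal E_s^\Omega(T_\gamma u,v)=\int_\Omega\gamma uv$; the eigenvalues of \eqref{eq:Pb-1} are exactly the reciprocals of the eigenvalues of $T_\gamma$. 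The map $\gamma\mapsto T_\gamma$ is (affine, hence) real-analytic from $C^0(\mathbb R^n)$ into the Banach space of bounded operators, which puts us in position to apply analytic perturbation theory of Kato.

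Next I would reduce, exactly as motivated in the introduction, to the local statement: given $\alpha$, an eigenvalue $\bar\lambda$ of \eqref{eq:Pb-1} of multiplicity $\nu>1$, and an interval $U$ isolating $\bar\lambda$ in the spectrum, it suffices to produce an arbitrarily small $\beta$ so that inside $U$ the perturbed operator has $\nu$ \emph{simple} eigenvalues; a standard countable iteration (enumerate the multiple eigenvalues, shrink the perturbations geometrically, use that simplicity is an open condition on each finite window $U$) then yields global simplicity and proves the theorem. For the local step, let $P$ be the spectral projection of $T_\alpha$ onto the $\bar\lambda^{-1}$-eigenspace $V$, $\dim V=\nu$, with orthonormal basis $\varphi_1,\dots,\varphi_\nu$. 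By Kato's reduction, for $\beta$ small the part of the spectrum of $T_{\alpha+\beta}$ near $\bar\lambda^{-1}$ is described by the eigenvalues of the $\nu\times\nu$ analytic matrix family $M(\beta)=PT_{\alpha+\beta}P|_V$; to first order, $\langle (DM(0)\beta)\varphi_i,\varphi_j\rangle_{\mathcal H^s_0} = \int_\Omega \beta\,\varphi_i\varphi_j$. Thus the differential of $M$ at $0$, as a map $C^0\to \mathrm{Sym}(\nu)$, is $\beta\mapsto\big(\int_\Omega\beta\,\varphi_i\varphi_j\big)_{ij}$.

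The crux is surjectivity of this differential onto the space of symmetric $\nu\times\nu$ matrices: I must show the functions $\{\varphi_i\varphi_j\}_{1\le i\le j\le\nu}$ are linearly independent in $L^2(\Omega)$ (equivalently, in the dual pairing against $C^0$), so that one can realize any prescribed symmetric matrix — in particular one with distinct eigenvalues — as $DM(0)\beta$. This is the step I expect to be the main obstacle, and it is where nonlocality genuinely complicates matters: one cannot use ODE/unique-continuation arguments as for $s=1$. Instead I would invoke the strong unique continuation and real-analyticity properties of fractional-Laplacian eigenfunctions in the interior of $\Omega$ (via the Caffarelli–Silvestre extension, the $\varphi_i$ are restrictions of functions real-analytic in $\Omega$, and they cannot vanish on open subsets): suppose $\sum_{i\le j}c_{ij}\varphi_i\varphi_j\equiv 0$ on $\Omega$; viewing this as a quadratic form $q(x)=\Phi(x)^\top C\,\Phi(x)$ in the vector $\Phi=(\varphi_1,\dots,\varphi_\nu)$, real-analyticity and independence of the $\varphi_i$ force, after a standard argument using that $\{\Phi(x):x\in\Omega\}$ is not contained in any proper subspace (else the $\varphi_i$ would be linearly dependent) and in fact spans generic directions, that $C=0$; the precise form of this argument (showing the image of $\Phi$ is "rich enough" to separate symmetric forms) is the one delicate point, handled by differentiating the identity $q\equiv 0$ and using the interior real-analyticity. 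Once surjectivity of $DM(0)$ is established, the implicit/submersion argument gives an arbitrarily small $\beta$ with $M(\beta)$ having $\nu$ distinct eigenvalues, hence $\nu$ simple eigenvalues of \eqref{eq:Pb-1} inside $U$, completing the local step and therefore the proof.
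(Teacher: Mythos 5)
Your overall scaffolding (compact self-adjoint operator $T_\gamma$ with $\mathcal E_s^\Omega(T_\gamma u,v)=\int_\Omega\gamma uv$, reduction to a local splitting statement near a multiple eigenvalue, iteration with geometrically shrinking perturbations) matches the paper's. But the heart of your local step diverges, and that is where there is a real gap. You aim to prove that the differential $\beta\mapsto\bigl(\int_\Omega\beta\,\varphi_i\varphi_j\bigr)_{i\le j}$ is \emph{surjective} onto the symmetric $\nu\times\nu$ matrices, which requires the products $\{\varphi_i\varphi_j\}_{i\le j}$ to be linearly independent in $L^2(\Omega)$. You correctly identify this as the delicate point, but then only sketch it: ``the precise form of this argument\dots\ is the one delicate point, handled by differentiating the identity $q\equiv 0$ and using the interior real-analyticity.'' That is not a proof. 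Carried out honestly, the argument needs strong unique continuation for fractional eigenfunctions (so that a relation like $\varphi_2=k\varphi_1$ on a component of $\{\varphi_1\ne 0\}$ propagates to all of $\Omega$), a nontrivial import in the nonlocal setting, and you do not supply it.

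The paper's proof needs much less and therefore avoids this obstacle entirely. It applies the abstract manifold result (Theorem \ref{thm:astratto}) to $E_\beta=(i^*\circ i)_{\alpha+\beta}$: if the eigenvalue failed to split for \emph{every} small $\beta$, then for each $\beta$ there would be a scalar $\rho(\beta)$ with
\begin{equation*}
\int_\Omega\beta\,\varphi_i\varphi_j=\rho(\beta)\,\delta_{ij}\qquad\text{for all }i,j=1,\dots,\nu.
\end{equation*}
Taking $i\ne j$ gives $\int_\Omega\beta\,\varphi_1\varphi_2=0$ for all $\beta\in C^0$, hence $\varphi_1\varphi_2\equiv 0$ a.e.; subtracting the $(1,1)$ and $(2,2)$ entries gives $\int_\Omega\beta(\varphi_1^2-\varphi_2^2)=0$ for all $\beta$, hence $\varphi_1^2\equiv\varphi_2^2$ a.e. Together these force $\varphi_1\equiv\varphi_2\equiv 0$ a.e., contradicting orthonormality. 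In other words, one does not need the differential to be \emph{onto} $\mathrm{Sym}(\nu)$; one only needs its image not to be contained in $\mathbb R\,I$, and this is elementary, with no unique continuation or analyticity input. Your approach would, if completed, prove a strictly stronger first-order statement, but for Theorem \ref{main-1} it is overkill, and as written it leaves the decisive step unproved.
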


The strategy of the proofs of the above theorems relies on an abstract
result which is presented in Section \ref{sec:abstract-result}. In
particular, Theorem \ref{thm:astratto} provides us a so called \emph{splitting condition}, which is crucial 
to find the perturbation term $\psi$ (or $b$,$\beta$) for which
all eigenvalues are simple as claimed in Theorem \ref{thm:main-0} (Th. \ref{main} and Th. \ref{main-1}, respectively).
Throughout the paper we will give a detailed proof of Theorem \ref{thm:main-0}, from Section \ref{domain} 
to Section \ref{Sec.mainproof} while in Section \ref{sec:Thm2} and in Section \ref{sec:Thm3} 
we will only describe the main steps to get Theorem \ref{main} and Theorem \ref{main-1}.

\subsection*{Acknowledgments}
 The authors would like to  thank Matteo Cozzi, Nicola Soave and Enrico Valdinoci for some helpful discussions.
 
\section{Domain perturbations}\label{domain}

In this section we study how a perturbation of the domain affects
the multiplicity of eigenvalues. The main point is, given a
smooth perturbation of the domain of the form $I+\psi$, to introduce, by a
suitable change of variables, the bilinear form $\mathcal{B}_{s}^{\psi}$
in (\ref{eq:Bpsi}) to which we apply the splitting condition of Theorem
\ref{thm:astratto}. The problem of the splitting of the eigenvalues
with respect to domain perturbation was studied for the standard Laplacian
in \cite{He,LM,M,M73}, from which we derive this strategy and which
we refer to for a bibliography on the subject.

For a function $\psi\in C^1(\mathbb{R}^n,\mathbb{R}^n)$, we define
\[
\Omega_{\psi}:=(I+\psi)\Omega.
\]

If  $\|\psi\|_{C^{1}}\le L$ for some $L<1$  then
$(I+\psi)$ is invertible on $\Omega_{\psi}$ with inverse mapping
$(I+\psi)^{-1}=I+\chi$. In the following we always consider $\psi\in C^{1}(\mathbb{R}^{n},\mathbb{R}^{n})$
with $\|\psi\|_{C^{1}}\le L$. Also, we denote $J_{I+\psi}$ as the
Jacobian determinant of the mapping $I+\psi$. Whenever no ambiguity
is possible, we use also the short notation $J_{\psi}:=J_{I+\psi}$.

\begin{rem}
It is well known that, if $\psi$ is sufficiently regular, the following
expansion holds for $\varepsilon$ small
\begin{align*}
J_{I+\varepsilon\psi} & =1+\varepsilon\text{div}\psi+\varepsilon^{2}a_{2}+\dots+\varepsilon^{n}a_{n}
\end{align*}
for suitable $a_{i}$.
\end{rem}

By the change of variables given by the mapping $(I+\psi)$, and denoted
$\tilde{u}(\xi):=u(\xi+\psi(\xi))$, 
we obtain the bilinear form $\mathcal{B}_{s}^{\psi}$ on $\mathcal{H}_{0}^{s}(\Omega)$

\begin{multline}
\mathcal{E}_{s}^{\Omega_{\psi}}(u,v)=\frac{1}{2}\int_{\mathbb{R}^{n}}\int_{\mathbb{R}^{n}}\frac{(u(x)-u(y))(v(x)-v(y))}{|x-y|^{n+2s}}dxdy\\
=\frac{1}{2}\int_{\mathbb{R}^{n}}\int_{\mathbb{R}^{n}}\frac{(\tilde{u}(\xi)-\tilde{u}(\eta))(\tilde{v}(\xi)-\tilde{v}(\eta))}{|\xi-\eta+\psi(\xi)-\psi(\eta)|^{n+2s}}J_{\psi}(\xi)J_{\psi}(\eta)d\xi d\eta\\
=:\mathcal{B}_{s}^{\psi}(\tilde{u},\tilde{v}),\label{eq:Bpsi}
\end{multline}
for $\tilde{u},\tilde{v}\in\mathcal{H}_{0}^{s}(\Omega)$. Notice that $\mathcal{B}_{s}^{0}(\tilde{u},\tilde{v})=\mathcal{E}_{s}^{\Omega}(\tilde{u},\tilde{v})$.

At this point, one can prove by direct computation the following result.
\begin{lem}
Let $\psi\in C^{1}$, and take $\tilde{u}\in\mathcal{H}_{0}^{s}(\Omega)$.
Then 
\[
\mathcal{B}_{s}^{\psi}(\tilde{u},\tilde{u})=\mathcal{E}_{s}^{\Omega_{\psi}}(u,u)\le C_{1}\left[\mathcal{E}_{s}^{\Omega}(\tilde{u},\tilde{u})+\|\tilde{u}\|_{L^{2}(\Omega)}\right]\le C_{2}\mathcal{E}_{s}^{\Omega}(\tilde{u},\tilde{u})
\]
for some positive contants $C_{1},C_{2}$.
\end{lem}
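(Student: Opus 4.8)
The claim is a two-sided comparison between $\mathcal{B}_s^\psi(\tilde u,\tilde u)=\mathcal{E}_s^{\Omega_\psi}(u,u)$ and the unperturbed Gagliardo form $\mathcal{E}_s^\Omega(\tilde u,\tilde u)$ (which equals $\|\tilde u\|_{\mathcal{H}_0^s(\Omega)}^2$), up to multiplicative constants depending only on $n,s,L$. The plan is to work directly from the integral representation in (\ref{eq:Bpsi}), estimating the distorted kernel $|\xi-\eta+\psi(\xi)-\psi(\eta)|^{-(n+2s)}$ against $|\xi-\eta|^{-(n+2s)}$ and controlling the Jacobian factors. Since $\|\psi\|_{C^1}\le L<1$, the mean value inequality gives $|\psi(\xi)-\psi(\eta)|\le L|\xi-\eta|$, hence
\[
(1-L)|\xi-\eta|\le |\xi-\eta+\psi(\xi)-\psi(\eta)|\le (1+L)|\xi-\eta|,
\]
which immediately yields
\[
(1+L)^{-(n+2s)}\,\frac{1}{|\xi-\eta|^{n+2s}}\le\frac{1}{|\xi-\eta+\psi(\xi)-\psi(\eta)|^{n+2s}}\le (1-L)^{-(n+2s)}\,\frac{1}{|\xi-\eta|^{n+2s}}.
\]
Also $0<c(L)\le J_\psi(\xi)\le C(L)$ uniformly, again because $\|\psi\|_{C^1}\le L$. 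Combining these bounds in (\ref{eq:Bpsi}) with $u=v=\tilde u\circ(I+\psi)^{-1}$ (so the integrand is pointwise nonnegative, being a product of identical factors) gives at once
\[
c_1\,\mathcal{E}_s^\Omega(\tilde u,\tilde u)\le \mathcal{B}_s^\psi(\tilde u,\tilde u)\le c_2\,\mathcal{E}_s^\Omega(\tilde u,\tilde u),
\]
with $c_1=(1+L)^{-(n+2s)}c(L)^2>0$ and $c_2=(1-L)^{-(n+2s)}C(L)^2$. This already establishes the first identity and the final inequality of the lemma; the intermediate term $\mathcal{E}_s^\Omega(\tilde u,\tilde u)+\|\tilde u\|_{L^2(\Omega)}$ is then sandwiched trivially using $\|\tilde u\|_{L^2}^2\lesssim \mathcal{E}_s^\Omega(\tilde u,\tilde u)$, which holds since $\Omega$ is bounded (Poincaré, or the positivity of $\lambda_{1,s}$).

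The only genuinely delicate point is the lower bound on $J_\psi$: we need $(I+\psi)$ to be a bi-Lipschitz diffeomorphism so that the change of variables in (\ref{eq:Bpsi}) is legitimate and $J_\psi$ does not vanish. This is exactly the hypothesis $\|\psi\|_{C^1}\le L<1$ recorded just before the lemma, which makes $D(I+\psi)=I+D\psi$ invertible with $\|(I+D\psi)^{-1}\|\le (1-L)^{-1}$, hence $|J_\psi|\ge (1-L)^n>0$ and $|J_\psi|\le (1+L)^n$. With that in hand everything is a routine pointwise kernel estimate, so I would present it compactly: state the two elementary bounds above, note the Jacobian bounds, and conclude by integrating. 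I expect no real obstacle beyond keeping track of the constants' dependence on $L$, $n$, $s$, and invoking boundedness of $\Omega$ for the $L^2$ term.
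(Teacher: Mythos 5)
Your proof is correct and is precisely the ``direct computation'' the paper alludes to without writing out: the paper states the lemma with only the remark that it follows by direct computation, and your kernel-comparison argument using $\|\psi\|_{C^1}\le L<1$ to get $(1-L)|\xi-\eta|\le|\xi-\eta+\psi(\xi)-\psi(\eta)|\le(1+L)|\xi-\eta|$ together with the singular-value bounds on $J_\psi=\det(I+D\psi)$ is exactly that computation. One small remark: the paper's statement has $\|\tilde u\|_{L^2(\Omega)}$ unsquared, which is almost certainly a typo for $\|\tilde u\|_{L^2(\Omega)}^2$ (otherwise the middle quantity is not $2$-homogeneous and the final inequality would fail as $\tilde u\to 0$); your proof correctly works with the squared norm and in fact delivers the stronger two-sided bound $c_1\,\mathcal{E}_s^\Omega(\tilde u,\tilde u)\le\mathcal{B}_s^\psi(\tilde u,\tilde u)\le c_2\,\mathcal{E}_s^\Omega(\tilde u,\tilde u)$, which is what is actually used later (to see the norms are equivalent and the maps $\gamma_\psi$ are isomorphisms).
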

\begin{rem}
Let us define the map 
\begin{align*}
\gamma_\psi&:\mathcal{H}_{0}^{s}(\Omega_{\psi})\rightarrow\mathcal{H}_{0}^{s}(\Omega);\\
\gamma_\psi(u)&:=\tilde{u}(\xi)=u(\xi+\psi(\xi)).
\end{align*}
By the previous lemma we have that, if $\|\psi\|_{C^1}$  is sufficiently small
the following maps are continuous isomorphism
\begin{align*}
\gamma_{\psi} & :\mathcal{H}_{0}^{s}(\Omega_{\psi})\rightarrow\mathcal{H}_{0}^{s}(\Omega)\\
\gamma_{\psi}^{-1}=\gamma_{\chi} & :\mathcal{H}_{0}^{s}(\Omega)\rightarrow\mathcal{H}_{0}^{s}(\Omega_{\psi}).
\end{align*}
In addition $\mathcal{B}_{s}^{\psi}(\tilde{u},\tilde{v})$ is a scalar
product on $\mathcal{H}_{0}^{s}(\Omega)$, and the norm induced by
$\mathcal{B}_{s}^{\psi}(\cdot,\cdot)$ is equivalent to the one induced
by $\mathcal{E}_{s}^{\Omega}(\cdot,\cdot)$.
\end{rem}

It is well known that the embedding $i:\mathcal{H}_{0}^{s}(\Omega)\rightarrow L^{2}(\Omega)$
is compact, so we consider the adjoint operator, with
respect to $\mathcal{E}_{s}^{\Omega}$, 
\[
i^{*}:L^{2}(\Omega)\rightarrow\mathcal{H}_{0}^{s}(\Omega).
\]
The composition 
$E_\O:=(i^{*}\circ i)_{\Omega}:\mathcal{H}_{0}^{s}(\Omega)\rightarrow\mathcal{H}_{0}^{s}(\Omega)$
is selfadjoint, compact, injective with dense image in $\mathcal{H}_{0}^{s}(\Omega)$
and it holds
\begin{equation}
\mathcal{E}_{s}^{\Omega}\left((i^{*}\circ i)_{\Omega}v,u\right)=\int_{\Omega}uv.\label{eq:ii*-0}
\end{equation}

\begin{rem}
\label{rem:i*-0}If $\varphi_{k}\in\mathcal{H}_{0}^{s}(\Omega)$ is
an eigenfunction of the fractional Laplacian with eigenvalue $\lambda_{k}$,
then $\varphi_{k}$ is an eigenfunction of $(i^{*}\circ i)_{\Omega}$
with eigenvalue $\mu_{k}^{\Omega}:=1/\lambda_{k}$. In fact, it holds
\[
\mathcal{E}_{s}^{\Omega}(\varphi_{k},v)=\lambda_{k}\int_{\mathbb{R}^{n}}\varphi_{k}vdx=\int_{\mathbb{R}^{n}}\lambda_{k}\varphi_{k}vdx=\mathcal{E}_{s}^{\Omega}\left(\lambda_{k}(i^{*}\circ i)_{\Omega}\varphi_{k},v\right),
\]
thus $\lambda_{k}(i^{*}\circ i)_{\Omega}\varphi_{k}=\varphi_{k}$.

\end{rem}

We recall two min-max characterizations of eigenvalues $\mu_{k}^{\Omega}$. We have that
\begin{eqnarray*}
\mu_{1}^{\Omega}  :=\sup_{u\in\mathcal{H}_{\Omega}^{s}\smallsetminus0}\frac{\int_{\Omega}u^{2}dx}{\mathcal{E}_{s}^{\Omega}(u,u)};&& 
\mu_{\nu}^\Omega  :=\sup_{\begin{array}{c}
u\in\mathcal{H}_{\Omega}^{s}\smallsetminus0\\
\mathcal{E}_{s}^{\Omega}(u,e_{t})=0\\
t=1,\dots\nu-1
\end{array}}
\frac{\int_{\Omega}u^{2}dx}{\mathcal{E}_{s}^{\Omega}(u,u)};
\end{eqnarray*}
where $(i^{*}\circ i)_{\Omega}e_{t}=\mu_{t}^{\Omega}e_{t}$; equivalently, 
\begin{equation*}
\mu_{\nu}^{\Omega} :=\inf_{V=\left\{ v_{1},\dots,v_{\nu-1}\right\} }\sup_{\begin{array}{c}
u\in\mathcal{H}_{\Omega}^{s}\smallsetminus0\\
\mathcal{E}_{s}^{\Omega}(u,v_{t})=0\\
t=1,\dots\nu-1
\end{array}}\frac{\int_{\Omega}u^{2}dx}{\mathcal{E}_{s}^{\Omega}(u,u)}.
\end{equation*}

By this characterization, and by (\ref{eq:Bpsi}), it is easy to prove the following result 
\begin{lem}
Every eigenvalue \textup{$\mu_{k}$} of the operator $E_{\psi}:=E_{\O_\psi}$ is
continuous at 0 with respect to $\psi\in C^{1}(\mathbb{R}^{n},\mathbb{R}^{n})$.
\end{lem}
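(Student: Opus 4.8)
The plan is to establish the continuity of each eigenvalue $\mu_k$ of $E_\psi$ at $\psi = 0$ by combining the min-max characterization recalled above with the quantitative comparison of the bilinear forms $\mathcal{B}_s^\psi$ and $\mathcal{E}_s^\Omega$. The key observation is that, via the isomorphism $\gamma_\psi\colon\mathcal{H}_0^s(\Omega_\psi)\to\mathcal{H}_0^s(\Omega)$, the eigenvalues $\mu_k^{\Omega_\psi}$ of $E_\psi$ coincide with the Rayleigh quotient eigenvalues of the pair of forms $\left(\tilde u\mapsto\int_{\Omega}|\tilde u|^2 J_\psi\,d\xi,\ \mathcal{B}_s^\psi(\tilde u,\tilde u)\right)$ on the fixed space $\mathcal{H}_0^s(\Omega)$. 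Indeed, $\mathcal{E}_s^{\Omega_\psi}(u,u) = \mathcal{B}_s^\psi(\tilde u,\tilde u)$ by \eqref{eq:Bpsi} and $\int_{\Omega_\psi}u^2\,dx = \int_\Omega \tilde u^2 J_\psi\,d\xi$ by the same change of variables; so the problem is transported to the $\psi$-independent Hilbert space $\mathcal{H}_0^s(\Omega)$, with only the coefficients of the two quadratic forms depending on $\psi$.

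First I would make this transport precise and write
\[
\mu_\nu^{\Omega_\psi} = \inf_{\substack{V\subset \mathcal{H}_0^s(\Omega)\\ \dim V = \nu-1}}\ \sup_{\substack{\tilde u\in \mathcal{H}_0^s(\Omega)\smallsetminus 0\\ \mathcal{B}_s^\psi(\tilde u, v)=0\ \forall v\in V}}\frac{\int_\Omega \tilde u^2 J_\psi\,d\xi}{\mathcal{B}_s^\psi(\tilde u,\tilde u)},
\]
using that $\gamma_\psi$ is an isomorphism so the constraint subspaces correspond bijectively. Second, I would show that as $\|\psi\|_{C^1}\to 0$ one has, uniformly over $\tilde u\in\mathcal{H}_0^s(\Omega)\smallsetminus 0$,
\[
\frac{\mathcal{B}_s^\psi(\tilde u,\tilde u)}{\mathcal{E}_s^\Omega(\tilde u,\tilde u)} \longrightarrow 1 \qquad\text{and}\qquad \frac{\int_\Omega \tilde u^2 J_\psi\,d\xi}{\int_\Omega \tilde u^2\,d\xi}\longrightarrow 1.
\]
The second limit is immediate since $J_\psi = 1 + \mathrm{div}\,\psi + O(\|\psi\|_{C^1}^2)\to 1$ uniformly. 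For the first, I would estimate the integrand of $\mathcal{B}_s^\psi - \mathcal{B}_s^0$ pointwise: the factor $J_\psi(\xi)J_\psi(\eta)$ is $1 + O(\|\psi\|_{C^1})$, and the kernel ratio $|\xi-\eta|^{n+2s}/|\xi-\eta+\psi(\xi)-\psi(\eta)|^{n+2s}$ is $1 + O(\|\psi\|_{C^1})$ because $|\psi(\xi)-\psi(\eta)|\le \|\psi\|_{C^1}|\xi-\eta|$ keeps the perturbed difference comparable to $|\xi-\eta|$ (this needs $\|\psi\|_{C^1}<1$, which is assumed). Hence $|\mathcal{B}_s^\psi(\tilde u,\tilde u) - \mathcal{E}_s^\Omega(\tilde u,\tilde u)|\le C\|\psi\|_{C^1}\,\mathcal{E}_s^\Omega(\tilde u,\tilde u)$, which is the desired uniform estimate; note the Lemma stated just above already gives the one-sided bound $\mathcal{B}_s^\psi\le C_2\mathcal{E}_s^\Omega$, and the same computation produces the lower bound and the $O(\|\psi\|_{C^1})$ rate.

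Third, I would feed these two uniform ratio estimates into the min-max formula. Writing $1-\delta(\psi)\le \mathcal{B}_s^\psi/\mathcal{E}_s^\Omega\le 1+\delta(\psi)$ and $1-\delta(\psi)\le \int \tilde u^2 J_\psi/\int \tilde u^2\le 1+\delta(\psi)$ with $\delta(\psi)\to 0$, every constrained Rayleigh quotient for the $\psi$-problem is squeezed between $\frac{1-\delta}{1+\delta}$ and $\frac{1+\delta}{1-\delta}$ times the corresponding quotient for $\psi=0$; taking sup then inf (and using that the constraint sets match up under $\mathcal{B}_s^\psi$ versus $\mathcal{E}_s^\Omega$ only up to the same isomorphism, so one may compare inf-sup over the same family of test subspaces after a bounded perturbation of the orthogonality condition) yields
\[
\frac{1-\delta(\psi)}{1+\delta(\psi)}\,\mu_\nu^\Omega \ \le\ \mu_\nu^{\Omega_\psi}\ \le\ \frac{1+\delta(\psi)}{1-\delta(\psi)}\,\mu_\nu^\Omega,
\]
whence $\mu_\nu^{\Omega_\psi}\to\mu_\nu^\Omega$ as $\|\psi\|_{C^1}\to 0$. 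I expect the one genuinely delicate point to be the bookkeeping with the orthogonality constraints: the min-max for $E_\psi$ naturally uses $\mathcal{B}_s^\psi$-orthogonality to eigenvectors of $E_\psi$, so to compare with $E_\Omega$ one should use the second (inf over subspaces $V$) form of the characterization, which is robust because it ranges over \emph{all} $(\nu-1)$-dimensional subspaces and the constraint $\mathcal{B}_s^\psi(\tilde u,v)=0$ versus $\mathcal{E}_s^\Omega(\tilde u,v)=0$ changes the admissible set by an amount controlled by $\delta(\psi)$; carefully, one shows both inequalities by using, in one direction, an optimal $(\nu-1)$-subspace for one form as a competitor for the other. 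Everything else is a routine change of variables and pointwise kernel estimate.
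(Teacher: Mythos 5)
Your proposal is correct and follows exactly the route the paper indicates: transport to the fixed space $\mathcal{H}_0^s(\Omega)$ via $\gamma_\psi$, show that $\mathcal{B}_s^\psi/\mathcal{E}_s^\Omega$ and $\int_\Omega \tilde u^2 J_\psi/\int_\Omega \tilde u^2$ both tend to $1$ uniformly as $\|\psi\|_{C^1}\to 0$ (the kernel-ratio and Jacobian estimates are right), and feed this into the min-max characterization. One small streamlining: phrasing Courant--Fischer as $\mu_\nu = \max_{\dim W=\nu}\min_{\tilde u\in W\smallsetminus 0}$ of the Rayleigh quotient over $\nu$-dimensional subspaces of $\mathcal{H}_0^s(\Omega)$ avoids the bookkeeping you flag about the $\psi$-dependent orthogonality constraint, since the same family of test subspaces serves for every $\psi$ and the Rayleigh-quotient sandwich then passes to the eigenvalues immediately.
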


Finally, since in Remark \ref{rem:i*-0} we proved that if $\varphi_{k}$
is an eigenfuntion of $(-\Delta)^{s}$ with Dirichlet boundary conditions
on $\Omega_{\psi}$ with eigenvalue $\lambda_{k}$, then ${\varphi}_{k}$
is an eigenfunction of $E_{\psi}$ with eigenvalue $\mu_{k}:=1/\lambda_{k}$,
to obtain the main result of this paper, we study the
multiplicity of the eigenvalues $\mu_{k}$ of the operator $E_{\psi}$. 
For this purpose, in the next section we collect an abstract result which we will apply to the operator $E_{\psi}$.

\section{An abstract result\label{sec:abstract-result}}

We recall a series of abstract results which holds in general in a
Hilbert space $X$ endowed with scalar product $<\cdot,\cdot>_{X}$.
Later, in the paper, we will apply these abstract results to derive
a splitting condition for multiple eigenvalues. The proof of these
results, are contained \cite[Section 2]{M}. However, to make this
paper self contained, we recall them in the appendix.

Let 

\[
F_{ij}:=\left\{ A\in L(X,X)\ :\ \text{codim Im}A=i\text{ and }\dim\ker A=j\right\} 
\]
be the set of Fredholm operator with indices $i$ and $j$ in the
Banach space $L(X,X):=\left\{ A:X\rightarrow X\ :\ A\text{ linear and continuous}\right\} $. 

We show first that $F_{ij}$ is a smooth submanifold of codimension
$ij$ in $L(X,X)$. It is well known that if $A\in F_{ij}$, there
exist closed subspaces $V,W\subset X$ such that 
\[
X=\ker A\oplus V\text{ and }X=W\oplus\text{Im}A.
\]
Let us call $P,Q,\bar{P}$ and $\bar{Q}$ the projector on $\ker A,V,W,\text{Im}A$,
respectively. It holds 
\begin{lem}
\label{lem:deco}We have 
\[
L(X,X)=L\oplus\mathcal{V},
\]
where
\begin{align*}
\mathcal{V}:= & \left\{ T\in L(X,X)\ :\ T(\ker A)\subset\mathrm{Im}A\right\} \\
L:= & \left\{ \bar{P}HP\in L(X,X)\text{ with }H\in L(X,X)\right\} .
\end{align*}
\end{lem}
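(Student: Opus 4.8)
The plan is to write down the projection of $L(X,X)$ onto $L$ along $\mathcal{V}$ explicitly and then verify the two halves of a direct‑sum decomposition separately, using the complementary projector identities $P+Q=I$ (from $X=\ker A\oplus V$) and $\bar P+\bar Q=I$ (from $X=W\oplus\mathrm{Im}\,A$).

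\emph{Surjectivity, i.e. $L(X,X)=L+\mathcal{V}$.} Given $T\in L(X,X)$, I would use the candidate splitting
$T=\bar P T P+(T-\bar P T P)$. The first term is of the form $\bar P H P$ with $H=T$, so it lies in $L$. For the second term: if $x\in\ker A$ then $Px=x$, hence $(T-\bar P T P)x=Tx-\bar P T x=\bar Q T x\in\mathrm{Im}\,\bar Q=\mathrm{Im}\,A$. Thus $T-\bar P T P$ maps $\ker A$ into $\mathrm{Im}\,A$, i.e. it belongs to $\mathcal{V}$.

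\emph{Triviality of the intersection, i.e. $L\cap\mathcal{V}=\{0\}$.} Let $S=\bar P H P\in\mathcal{V}$. Since $S=\bar P(\cdot)$, its range is contained in $W=\mathrm{Im}\,\bar P$, and since $S=(\cdot)P$, it vanishes on $V$. As $S\in\mathcal{V}$ we also have $S(\ker A)\subset\mathrm{Im}\,A$; combined with $S(\ker A)\subset W$ and $W\cap\mathrm{Im}\,A=\{0\}$ this forces $S$ to vanish on $\ker A$ as well. Because $X=\ker A\oplus V$, we conclude $S=0$. Together with the previous step this gives the algebraic direct sum $L(X,X)=L\oplus\mathcal{V}$.

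\emph{Upgrading to a topological direct sum.} The map $\Pi\colon T\mapsto \bar P T P$ is a bounded linear idempotent on $L(X,X)$ with range $L$; one checks that $\ker\Pi=\mathcal{V}$, since for every $x$ one has $TPx\in\mathrm{Im}\,A=\ker\bar P$, so $\mathcal{V}\subset\ker\Pi$, and the reverse inclusion follows from $L\oplus\mathcal{V}=L(X,X)$. Hence $L$ and $\mathcal{V}$ are closed and the sum is topological, which is what is needed afterwards to view $F_{ij}$ as a submanifold. I do not expect a genuine obstacle here: the only care required is bookkeeping of which projector lands in which subspace ($P,\bar P$ onto $\ker A$ and $W$, $Q,\bar Q$ onto $V$ and $\mathrm{Im}\,A$) and invoking $P+Q=I$, $\bar P+\bar Q=I$ at the right moments. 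It is worth recording already at this stage that $H\mapsto\bar P H P$ induces an isomorphism $L\cong L(\ker A,W)$, so $\dim L=ij$, which will deliver the codimension‑$ij$ claim for $F_{ij}$.
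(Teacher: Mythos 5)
Your proof is correct and uses essentially the same decomposition as the paper: the paper writes $T=\bar P T P+\bar Q T Q+\bar P T Q+\bar Q T P$ and observes that the last three terms lie in $\mathcal V$, which is exactly your splitting $T=\bar P T P+(T-\bar P T P)$. You simply spell out the details the paper leaves implicit (triviality of $L\cap\mathcal V$, closedness via the bounded idempotent $\Pi$, and the identification $L\cong L(\ker A,W)$), all of which are correct and harmless additions.
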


\begin{proof}
The claim can be showed immediately noticing that $T=\bar{P}TP+\bar{Q}TQ+\bar{P}TQ+\bar{Q}TP$
and that $\bar{Q}TQ+\bar{P}TQ+\bar{Q}TP\in\mathcal{V}$.
\end{proof}
\begin{lem}
\label{lem:varieta}We have that $F_{ij}$ is an analytic submanifold
of $L(X,X)$. In addition, for any $A\in L(X,X)$, the tangent space
in $A$ to $F_{ij}$, $T_{A}F_{ij}=\mathcal{V}$.
\end{lem}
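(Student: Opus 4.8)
The plan is to show two things: that $F_{ij}$ is an analytic submanifold of $L(X,X)$ near each of its points, and that the tangent space at a point $A \in F_{ij}$ coincides with $\mathcal{V}$ from Lemma \ref{lem:deco}. I would work locally near a fixed $A \in F_{ij}$, using the decomposition $L(X,X) = L \oplus \mathcal{V}$ just established. Elements of a neighborhood of $A$ are written as $A + T + S$ with $T \in L$ and $S \in \mathcal{V}$ small; since $L = \{\bar P H P\}$ is naturally isomorphic to $L(\ker A, W)$ (both are finite-dimensional of dimension $ij$, because $\dim \ker A = j$ and $\operatorname{codim}\operatorname{Im} A = \dim W = i$), I would parametrize $L$ by $ij$ real (or complex) coordinates. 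The goal is to exhibit $F_{ij}$ locally as the zero set of a submersion onto this $ij$-dimensional space, which forces $F_{ij}$ to be a submanifold of codimension $ij$ with tangent space equal to the kernel of the differential of that submersion, namely $\mathcal{V}$.

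Concretely, I would proceed as follows. First, fix $A \in F_{ij}$ and the projectors $P, Q, \bar P, \bar Q$ as in the excerpt. For $B \in L(X,X)$ close to $A$, consider the restriction-corestriction operator $\bar Q B|_{V} : V \to \operatorname{Im} A$; since $A|_V : V \to \operatorname{Im} A$ is an isomorphism and invertibility is an open condition, $\bar Q B|_V$ is also an isomorphism for $B$ near $A$. Using this, one builds an explicit change of coordinates (a Schur-complement / Lyapunov–Schmidt reduction) showing that $B$ is conjugate, by invertible operators depending analytically on $B$, to an operator of block form $\begin{pmatrix} \Phi(B) & 0 \\ 0 & \operatorname{id}_{\operatorname{Im} A}\end{pmatrix}$ relative to the splittings $X = \ker A \oplus V$ and $X = W \oplus \operatorname{Im} A$, where $\Phi(B) \in L(\ker A, W)$ depends analytically on $B$. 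Then $B \in F_{ij}$ in a neighborhood of $A$ if and only if $\Phi(B) = 0$: indeed the conjugation preserves $\dim\ker$ and $\operatorname{codim}\operatorname{Im}$, and the block operator has $\dim\ker = \dim\ker\Phi(B)$, $\operatorname{codim}\operatorname{Im} = \operatorname{codim}\operatorname{Im}\Phi(B)$, which equal $j$ and $i$ respectively exactly when $\Phi(B)$ is the zero map between spaces of dimensions $j$ and $i$.

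It remains to check that $\Phi$ is a submersion at $A$, i.e. that $d\Phi(A): L(X,X) \to L(\ker A, W)$ is surjective with kernel $\mathcal{V}$. Here one differentiates the explicit formula for $\Phi$: since the conjugating operators equal the identity at $B = A$, one finds $d\Phi(A) T = \bar P T|_{\ker A}$, that is, $d\Phi(A)T = \bar P T P$ viewed as a map $\ker A \to W$. This map is obviously surjective (given any $\bar P H P$ with $H \in L(X,X)$, take $T = \bar P H P$), and its kernel is precisely $\{T : \bar P T P = 0\} = \{T : T(\ker A) \subset \operatorname{Im} A\} = \mathcal{V}$. By the submersion theorem, $F_{ij} = \Phi^{-1}(0)$ is an analytic submanifold of codimension $\dim L(\ker A, W) = ij$, and $T_A F_{ij} = \ker d\Phi(A) = \mathcal{V}$; this identification is independent of the auxiliary choices of $V$ and $W$ because $\mathcal{V}$ is defined intrinsically from $A$.

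The main obstacle is the careful construction of the analytic normal form $\Phi(B)$ and the verification that its zero set is \emph{exactly} $F_{ij}$ locally (not merely contained in it or containing it): one must track how a small perturbation can both lower $\operatorname{codim}\operatorname{Im}$ and lower $\dim\ker$, and the Schur-complement bookkeeping must confirm that near $A$ no perturbation stays in $F_{ij}$ unless the finite block $\Phi(B)$ vanishes identically. Once this local normal form is in hand, the submanifold structure, the codimension count, and the tangent-space identification all follow formally from the implicit/submersion theorem; these latter steps are routine. I note that full details of this argument are given in \cite[Section 2]{M}, and are reproduced in the appendix for completeness.
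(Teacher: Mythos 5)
Your argument is correct and is essentially the same Lyapunov--Schmidt / Schur-complement reduction that the paper carries out in its appendix: you invert $\bar Q A|_V:V\to\operatorname{Im}A$, solve the ``$V$-equation'' for $v$, and reduce membership in $F_{ij}$ to the vanishing of a finite-dimensional analytic map with derivative $T\mapsto\bar P T P$. The only cosmetic difference is that you present $F_{ij}$ locally as the zero set of the submersion $\Phi$ while the paper presents it as the graph $Y=f(S,Z,T)$ over $\mathcal V$; these are equivalent descriptions of the same local normal form.
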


The proof of this result is postponed to appendix. Here we limit ourselves
to give the main idea. Given $A_{0}\in F_{ij}$, and given $H$ such
that $A_{0}+H$ still belongs to $F_{ij}$, it is possible to write
$H=\bar{P}HP+f(V)$ where $V\in\mathcal{V}$ and $f$ is an analytic
function. Then $F_{ij}$ near $A_{0}$ is a smooth graph on $\mathcal{V}$.
\begin{lem}
\label{lem:M}Let $A\in F_{ij}$ such that $\ker A\not\subset\mathrm{Im}A$.
Then
\[
M=\left\{ A+H+\lambda I\in L(X,X)\ :\ \lambda\in\mathbb{R},A+H\in F_{ij}\text{ and }H\text{ suff. small}\right\} 
\]
is an analytic manifold at $A+\lambda I$, and $T_{A+\lambda I}M=\mathcal{V}\oplus\mathrm{Span}<I>$
where $T_{A+\lambda I}M$ is the tangent space in $A+\lambda I$ to
$M$.
\end{lem}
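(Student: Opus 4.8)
\textbf{Proof plan for Lemma \ref{lem:M}.}
The plan is to reduce the statement to the already-established fact that $F_{ij}$ is an analytic submanifold of $L(X,X)$ with tangent space $\mathcal V$ at any of its points (Lemma \ref{lem:varieta}), together with the hypothesis $\ker A\not\subset\mathrm{Im}A$, which guarantees that the direction $I$ is genuinely transverse to $F_{ij}$. Concretely, I would consider the map
\[
\Phi:\{H\in L(X,X):\ A+H\in F_{ij},\ H\text{ small}\}\times\mathbb R\longrightarrow L(X,X),\qquad
\Phi(H,\lambda):=A+H+\lambda I.
\]
Since near $A$ the set $F_{ij}$ is, by Lemma \ref{lem:varieta}, an analytic graph over the subspace $\mathcal V$ — i.e. there is an analytic function $f$ defined on a neighbourhood of $0$ in $\mathcal V$ with $f(0)=0$, $df(0)=0$, such that $A+f(V)\in F_{ij}$ parametrizes $F_{ij}$ near $A$ — the set $M$ is the image of the analytic map $(V,\lambda)\mapsto A+f(V)+\lambda I$. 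So $M$ is the image of an analytic map from an open subset of $\mathcal V\times\mathbb R$, and its differential at $(0,0)$ sends $(V,\lambda)$ to $V+\lambda I$ (because $df(0)=0$).

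The key point is then that this differential is injective with closed split range, so that the map is an analytic immersion near $(0,0)$ and $M$ is an analytic submanifold with the asserted tangent space $\mathcal V\oplus\mathrm{Span}\langle I\rangle$. Injectivity amounts to showing $I\notin\mathcal V$: if $I$ belonged to $\mathcal V$, then by definition of $\mathcal V$ we would have $I(\ker A)\subset\mathrm{Im}A$, i.e. $\ker A\subset\mathrm{Im}A$, contradicting the hypothesis. Hence $\mathcal V\cap\mathrm{Span}\langle I\rangle=\{0\}$, the sum $\mathcal V\oplus\mathrm{Span}\langle I\rangle$ is direct, and since $\mathcal V$ is closed and of finite codimension $ij$ in $L(X,X)$ (Lemma \ref{lem:varieta}), adding the one-dimensional complement $\mathrm{Span}\langle I\rangle$ still yields a closed subspace that splits. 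Thus $d\Phi(0,0)$ is a linear homeomorphism onto a closed complemented subspace, and the analytic inverse/immersion theorem in Banach spaces applies.

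I would finish by spelling out that the tangent space to $M$ at $A+\lambda I$ (for any small $\lambda$) is exactly the image of $d\Phi$ at the corresponding parameter, which by the same computation is $\mathcal V\oplus\mathrm{Span}\langle I\rangle$; the translation invariance in $\lambda$ makes the description uniform in $\lambda$. The only genuinely delicate point is the transversality $I\notin\mathcal V$, which is precisely where the hypothesis $\ker A\not\subset\mathrm{Im}A$ enters; everything else is a routine application of Lemma \ref{lem:varieta} and the Banach-space immersion theorem. (This is the analogue, in our fractional setting, of the corresponding construction in \cite[Section 2]{M}.)
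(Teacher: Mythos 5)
Your proof is correct and takes essentially the same route as the paper: the key transversality observation $I\notin\mathcal V\iff\ker A\not\subset\mathrm{Im}A$ is exactly the paper's argument, and the rest of your write-up simply fleshes out what the paper compresses into the phrase ``$M$ is a ruled manifold and the thesis follows immediately'' by invoking the graph parametrization from Lemma~\ref{lem:varieta} and the Banach-space immersion theorem. (One small notational slip: you write that $A+f(V)$ parametrizes $F_{ij}$, but given $df(0)=0$ you clearly mean $A+V+f(V)$ with $f$ taking values in the complement $L$ of $\mathcal V$; the differential computation is correct.)
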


\begin{proof}
By definition of $\mathcal{V}$, we have that $I\in\mathcal{V}$ if
and only if $\ker A\subset\mathrm{Im}A$, which is not possible by
our hypothesis on $A$. Thus, by Lemma \ref{lem:varieta} we have
that $M$ is a ruled manifold and the thesis follows immediately.
\end{proof}
We can recast the previous result considering $T:X\rightarrow X$
a selfadjoint compact operator with an eigenvalue $\bar{\lambda}$
with multiplicity $\nu$. By Riesz theorem we have that $T-\bar{\lambda}I\in F_{\nu\nu}$
and that $\ker(T-\bar{\lambda}I)\cap\mathrm{Im}(T-\bar{\lambda}I)=\left\{ 0\right\} $.
Moreover by Lemma \ref{lem:M} if $U$ is a suitable neighborhood
of $T-\bar{\lambda}I$ we have that 
\[
\tilde{M}=\left\{ \tilde{T}+\lambda I\in L(X,X)\ :\ \lambda\in\mathbb{R}\text{ and }\tilde{T}\in F_{\nu\nu}\cap U\right\} 
\]
is a smooth manifold and $T_{T-\bar{\lambda}I}\tilde{M}=\tilde{\mathcal{V}}\oplus\mathrm{Span}<I>$
where 
\begin{equation}
\tilde{\mathcal{V}}=\left\{ H\in L(X,X)\ :\ H(\ker(T-\bar{\lambda}I))\subset\mathrm{Im}(T-\bar{\lambda}I)\right\} .\label{eq:Vtilde}
\end{equation}
At this point we are in position to enunciate the main result of this
section.
\begin{thm}
\label{thm:astratto}Let $T_{b}:X\rightarrow X$ be a selfadjoint
compact operator which depends smoothly on a parameter $b$ belonging
to a real Banach space $B$. Let $T_{0}=T$ and let $T_{b}$ be Frechet
differentiable in $b=0$. Let Let $x_{1}^{0},\dots,x_{\nu}^{0}$ be
an orthonormal basis for the eigenspace relative to the eigenvalue
$\bar{\lambda}$ of $T$. If $T_{b}\in\tilde{M}$ for all $b$ with
$\|b\|_{C^{0}}$ small, then for all $b$ there exist a $\rho=\rho(b)\in\mathbb{R}$
such that
\begin{equation}
\left\langle T'(0)[b]x_{j}^{0},x_{i}^{0}\right\rangle _{X}=\rho\delta_{ij}\text{ for }i,j=1,\dots,\nu.
\label{eq:spezzamentoastratto}
\end{equation}
\end{thm}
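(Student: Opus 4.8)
The plan is to derive the orthogonality relation (\ref{eq:spezzamentoastratto}) purely from the fact that the curve $b\mapsto T_b$ stays inside the smooth manifold $\tilde M$, together with the description of $T_{T-\bar\lambda I}\tilde M$ coming from Lemmas \ref{lem:varieta} and \ref{lem:M}. The guiding principle is the standard one: the velocity of a differentiable curve lying in a submanifold is a tangent vector of that submanifold.

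First I would observe that $T_0=T$ itself belongs to $\tilde M$: by the Riesz theory $T-\bar\lambda I\in F_{\nu\nu}$, so $T_0=(T-\bar\lambda I)+\bar\lambda I$ has the required form with $\tilde T=T-\bar\lambda I\in F_{\nu\nu}\cap U$. Then, fixing $b\in B$, I would consider the differentiable curve $t\mapsto T_{tb}$, which for small $|t|$ lies in $\tilde M$, equals $T_0$ at $t=0$, and has velocity $T'(0)[b]$ there. Since $\tilde M$ is a smooth (indeed analytic) submanifold of $L(X,X)$ — near $T_0$ it is the graph of a smooth map from a neighbourhood of the origin in its tangent space to a topological complement, with vanishing differential at the origin — the velocity of such a curve must lie in $T_{T_0}\tilde M$. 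Because $\tilde M$ is invariant under adding multiples of $I$, this tangent space coincides with the one computed at $T-\bar\lambda I$, namely $\tilde{\mathcal{V}}\oplus\mathrm{Span}\langle I\rangle$ with $\tilde{\mathcal{V}}$ as in (\ref{eq:Vtilde}). Hence we may write
\[
T'(0)[b]=H_b+\rho(b)\,I\qquad\text{with }H_b\in\tilde{\mathcal{V}},\ \rho(b)\in\mathbb{R}.
\]

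Next I would identify the kernel and image of $T-\bar\lambda I$. The kernel is exactly the eigenspace of $T$ for $\bar\lambda$, i.e. $\mathrm{Span}\{x_1^0,\dots,x_\nu^0\}$; since $T$ is selfadjoint and compact, $X$ splits orthogonally as $\ker(T-\bar\lambda I)\oplus\mathrm{Im}(T-\bar\lambda I)$, so $\mathrm{Im}(T-\bar\lambda I)=\ker(T-\bar\lambda I)^{\perp}$. By definition of $\tilde{\mathcal{V}}$ this gives $H_b x_j^0\in\mathrm{Im}(T-\bar\lambda I)=\{x_1^0,\dots,x_\nu^0\}^{\perp}$ for every $j$, whence $\langle H_b x_j^0,x_i^0\rangle_X=0$ for all $i,j\in\{1,\dots,\nu\}$. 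Using that $x_1^0,\dots,x_\nu^0$ are orthonormal, I would then compute
\[
\langle T'(0)[b]x_j^0,x_i^0\rangle_X=\langle H_b x_j^0,x_i^0\rangle_X+\rho(b)\,\langle x_j^0,x_i^0\rangle_X=\rho(b)\,\delta_{ij},
\]
which is precisely (\ref{eq:spezzamentoastratto}).

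The delicate step is the second one: rigorously showing that $T'(0)[b]$ belongs to $T_{T_0}\tilde M$. This is exactly where the manifold structure from Lemmas \ref{lem:varieta} and \ref{lem:M} is essential — one needs $\tilde M$ to be a genuine embedded submanifold so that, writing $T_{tb}-T_0=t\,T'(0)[b]+o(t)$ and projecting onto a topological complement of $T_{T_0}\tilde M$, the graph property (with vanishing derivative at the base point) forces the complement component of $T'(0)[b]$ to be $o(t)$ and hence, by linearity in $t$, to vanish. The remaining ingredients — that the tangent space at $T_0$ equals the one at $T-\bar\lambda I$ by the ruled structure, and the orthogonal decomposition $X=\ker(T-\bar\lambda I)\oplus\mathrm{Im}(T-\bar\lambda I)$ valid for a selfadjoint compact operator — are routine.
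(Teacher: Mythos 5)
Your proof is correct and follows essentially the same approach as the paper's: read off from Lemma \ref{lem:M} that $T'(0)[b]\in\tilde{\mathcal{V}}\oplus\mathrm{Span}\langle I\rangle$, then use the definition (\ref{eq:Vtilde}) of $\tilde{\mathcal{V}}$ to deduce the orthogonality relations. You simply make explicit two steps the paper leaves tacit, namely the tangent-vector argument for curves in a submanifold and the orthogonal splitting $X=\ker(T-\bar\lambda I)\oplus\mathrm{Im}(T-\bar\lambda I)$ coming from selfadjointness, which is what turns the inclusion $[T'(0)[b]-\rho I](\ker(T-\bar\lambda I))\subset\mathrm{Im}(T-\bar\lambda I)$ into the scalar identities.
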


\begin{proof}
By Lemma \ref{lem:M} we have that, if $T_{b}\in\tilde{M}$ for all
$b$, then 
\[
T'(0)[b]\in\tilde{\mathcal{V}}\oplus\mathrm{Span}<I>.
\]
 So, by (\ref{eq:Vtilde}), for all $b$, there exists $\bar{\lambda}(b)\in\mathbb{R}$,
such that 
\[
\left[T'(0)[b]-\bar{\lambda}(b)I\right](\ker(T-\bar{\lambda}I))\subset\mathrm{Im}(T-\bar{\lambda}I),
\]
that is 
\[
\left\langle \left[T'(0)[b]-\bar{\lambda}(b)I\right]x_{j}^{0},x_{i}^{0}\right\rangle _{X}=0
\]
for all $i,j=1,\dots,\nu$, which implies (\ref{eq:spezzamentoastratto}).
\end{proof}
This theorem says that if condition (\ref{eq:spezzamentoastratto})
is fulfilled, then the eigenvalue $\bar{\lambda}(b)$ has still multiplicity
$\nu$ in a neighborhood of $b=0$.

\section{Splitting of a single eigenvalue}\label{Sec.prel}
We recall that $E_{\psi}=(i^{*}\circ i)_{\Omega_{\psi}}$. Also,
by (\ref{eq:ii*-0}), and by the definition of $\tilde{u}$ we have 
\[
\mathcal{E}_{s}^{\Omega_{\psi}}(E_{\psi}u,v)=<u,v>_{L^{2}(\Omega_{\psi})}=
\int_\Omega \tilde{u}\tilde{v}J_\psi.
\]
By the definition of $\mathcal{B}_{s}^{\psi}$, we can rewrite the
previous formula as 
\[
\mathcal{B}_{s}^{\psi}(\gamma_{\psi}E_{\psi}u,\tilde{v})=\mathcal{E}_{s}^{\Omega_{\psi}}(E_{\psi}v,u)
=\int_\Omega \tilde{u}\tilde{v}J_\psi.
\]
Set 
\begin{equation}
T_{\psi}\tilde{u}:=\gamma_{\psi}E_{\psi}\gamma_{\psi}^{-1}\tilde{u},\label{eq:Tpsi}
\end{equation}
we get that $T_{\psi}:\mathcal{H}_{0}^{s}(\Omega)\rightarrow\mathcal{H}_{0}^{s}(\Omega)$
is a compact selfadjoint operator such that
\[
\mathcal{B}_{s}^{\psi}(T_{\psi}\tilde{u},\tilde{v})=\int_\Omega \tilde{u}\tilde{v}J_\psi
\]
for all $\psi$. 
\begin{rem}
One can prove that $T_{\psi}$ and $\mathcal{B}_{s}^{\psi}$ are differentiable in the $\psi$ variable at $0$. Then
it holds
\begin{equation}
\left(\mathcal{B}_{s}^{\psi}\right)'(0)[\psi](T_{0}\tilde{u},\tilde{v})+\mathcal{B}_{s}^{0}(T_{\psi}'(0)[\psi]\tilde{u},\tilde{v})
=\int_\Omega \tilde{u}\tilde{v}\mathrm{div}\psi.
\label{eq:SommaDerB}
\end{equation}
\end{rem}

\begin{lem}\label{lemma20}
Let $\tilde{u},\tilde{v}\in\mathcal{H}^s_0(\O)$ such that $ (-\Delta)^s \tilde{u}, (-\Delta)^s \tilde{v}\in 
C^\alpha_{\textrm{loc}}(\O)\cap L^\infty(\O)$ with $\alpha>(1-2s)_+$. Then
\begin{equation} 
\left(\mathcal{B}_{s}^{\psi}\right)'(0)[\psi](\tilde{u},\tilde{v})=
-\Gamma^2(1+s)\int_{\partial\Omega} \frac{\tilde{u}}{\delta^s}\frac{\tilde{v}}{\delta^s}\psi\cdot N d\sigma 
-\int_\Omega [\nabla \tilde{u}\cdot \psi (-\Delta)^s \tilde{v}+\nabla \tilde{v}\cdot \psi (-\Delta)^s \tilde{u}]dx
\end{equation}
where $\delta(x)=\mathrm{dist}(x,\mathbb{R}^n\smallsetminus\Omega)$ and $N $ is the exterior normal of $\O$.
\end{lem}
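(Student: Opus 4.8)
The plan is to compute the derivative at $\psi=0$ of the bilinear form $\mathcal{B}_s^{\psi}$ directly from its explicit integral representation in (\ref{eq:Bpsi}), and then to rewrite the resulting expression as a boundary term plus a bulk term involving the fractional Laplacian. First I would differentiate the kernel: writing $\psi=\varepsilon\phi$ and using $|\xi-\eta+\varepsilon(\phi(\xi)-\phi(\eta))|^{-(n+2s)}$, a Taylor expansion in $\varepsilon$ gives, to first order,
\[
-\frac{(n+2s)\,(\xi-\eta)\cdot(\phi(\xi)-\phi(\eta))}{|\xi-\eta|^{n+2s+2}},
\]
while the Jacobian factor contributes $J_{\varepsilon\phi}(\xi)J_{\varepsilon\phi}(\eta)=1+\varepsilon(\mathrm{div}\,\phi(\xi)+\mathrm{div}\,\phi(\eta))+O(\varepsilon^2)$ by the Remark on the Jacobian expansion. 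Collecting the $\varepsilon$-linear terms yields
\[
\left(\mathcal{B}_s^{\psi}\right)'(0)[\phi](\tilde u,\tilde v)
=\frac12\int_{\R^n}\!\int_{\R^n}\frac{(\tilde u(\xi)-\tilde u(\eta))(\tilde v(\xi)-\tilde v(\eta))}{|\xi-\eta|^{n+2s}}
\Bigl[\mathrm{div}\,\phi(\xi)+\mathrm{div}\,\phi(\eta)-(n+2s)\frac{(\xi-\eta)\cdot(\phi(\xi)-\phi(\eta))}{|\xi-\eta|^{2}}\Bigr]d\xi\,d\eta.
\]
This is the quantity that must be identified with the claimed formula.

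\textbf{From the kernel expression to the local form.} The heart of the argument is to recognize the bracketed quantity as (the density of) the first variation of the Gagliardo energy under the flow $\xi\mapsto\xi+\varepsilon\phi(\xi)$, i.e.\ as a Lie-derivative / domain-variation identity. The cleanest route is to integrate by parts in the double integral, treating the combination above as the divergence, in the product variable $(\xi,\eta)$, of a vector field built from $\phi$ and the kernel; after symmetrizing in $\xi\leftrightarrow\eta$ one should be able to move all derivatives onto $\tilde u$ and $\tilde v$. Concretely, one expects to arrive at
\[
\left(\mathcal{B}_s^{\psi}\right)'(0)[\phi](\tilde u,\tilde v)
=-\int_{\Omega}\bigl(\nabla\tilde u\cdot\phi\bigr)(-\Delta)^s\tilde v\,dx
 -\int_{\Omega}\bigl(\nabla\tilde v\cdot\phi\bigr)(-\Delta)^s\tilde u\,dx
 + (\text{boundary contribution}),
\]
using the weak formulation $\mathcal{E}_s^{\Omega}(\tilde u,w)=\int_\Omega (-\Delta)^s\tilde u\, w$ applied with $w=\nabla\tilde v\cdot\phi$ and $w=\nabla\tilde u\cdot\phi$ — which is legitimate precisely under the regularity hypothesis $(-\Delta)^s\tilde u,(-\Delta)^s\tilde v\in C^\alpha_{\mathrm{loc}}(\Omega)\cap L^\infty(\Omega)$ with $\alpha>(1-2s)_+$, since this is exactly the threshold guaranteeing $\tilde u,\tilde v\in C^s(\R^n)$ up to the boundary with the fine boundary behavior $\tilde u/\delta^s$ extending continuously to $\partial\Omega$ (Ros-Oton--Serra regularity). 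The boundary contribution is where the singular behavior $\tilde u\sim \tilde u/\delta^s\cdot\delta^s$ near $\partial\Omega$ produces, after the integration by parts, a genuine surface integral rather than zero; tracking the constant through the blow-up of the kernel near the diagonal on $\partial\Omega$ is what yields the factor $\Gamma^2(1+s)$ and the normal component $\psi\cdot N$. I would isolate a neighborhood of $\partial\Omega$, use the known asymptotics $\tilde u(x)=\frac{\tilde u}{\delta^s}(x_0)\,\delta(x)^s+o(\delta^s)$ as $x\to x_0\in\partial\Omega$, and compute the limiting boundary flux via a one-dimensional model computation (half-space), in which $\int_0^\infty\int_0^\infty \frac{(t^s-r^s)^2}{|t-r|^{1+2s}}\,\dots$ type integrals evaluate to the stated Gamma-function constant; this normalization is the analogue of the Pohozaev identity for the fractional Laplacian of Ros-Oton--Serra, and in fact the boundary term here is precisely the one appearing there.

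\textbf{Main obstacle.} The principal difficulty is the rigorous justification of the integration by parts in the singular double integral and the extraction of the boundary term with the correct constant: near the diagonal $\xi=\eta$ the integrand is not absolutely integrable, so one must work with a regularized kernel (a $\delta_0$-truncation $|\xi-\eta|>\delta_0$), perform the manipulations there, and then pass to the limit $\delta_0\to 0$, checking that the diagonal contributions cancel (by oddness of $(\xi-\eta)\cdot(\phi(\xi)-\phi(\eta))$ in the leading order) while the genuine boundary-of-$\Omega$ contribution survives. This is essentially a fractional Pohozaev/domain-variation identity, and I would lean heavily on the Ros-Oton--Serra machinery: their result that $\tilde u/\delta^s\in C(\overline\Omega)$ (indeed $C^\alpha$ up to the boundary) under the stated hypothesis makes the surface integral well-defined, and their Pohozaev identity provides the template for the constant $\Gamma^2(1+s)$. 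Once the identity is established for smooth $\phi$ and the regular class of $\tilde u,\tilde v$, a density argument is not needed since the statement is already only claimed for that class. I would also double-check the symmetry of the final expression in $\tilde u\leftrightarrow\tilde v$ as a sanity check, which is manifest both in the kernel form and in the claimed formula.
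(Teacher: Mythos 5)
Your first step matches the paper exactly: differentiate $\mathcal{B}_s^{\psi}$ at $\psi=0$ by Taylor expanding $|\xi-\eta+\psi(\xi)-\psi(\eta)|^{-(n+2s)}$ and $J_\psi(\xi)J_\psi(\eta)$ to obtain
\[
\left(\mathcal{B}_{s}^{\psi}\right)'(0)[\psi](\tilde{u},\tilde{v})
=\frac12\int_{\R^n}\!\int_{\R^n}\frac{(\tilde u(\xi)-\tilde u(\eta))(\tilde v(\xi)-\tilde v(\eta))}{|\xi-\eta|^{n+2s}}
\left\{\mathrm{div}\,\psi(\xi)+\mathrm{div}\,\psi(\eta)-\frac{(n+2s)(\xi-\eta)\cdot(\psi(\xi)-\psi(\eta))}{|\xi-\eta|^{2}}\right\}d\xi\,d\eta .
\]
Where you diverge from the paper is in passing from this kernel expression to the stated boundary-plus-bulk formula. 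The paper at this point simply invokes Theorem 1.3 of Djitte--Fall--Weth \cite{DFW}, a \emph{generalized} fractional Pohozaev identity for an arbitrary $C^1$ deformation field, which yields exactly the surface term $\Gamma^2(1+s)\int_{\partial\Omega}\frac{\tilde u}{\delta^s}\frac{\tilde v}{\delta^s}\,\psi\cdot N\,d\sigma$ together with the bulk terms $\int_\Omega(\nabla\tilde u\cdot\psi)(-\Delta)^s\tilde v$ and $\int_\Omega(\nabla\tilde v\cdot\psi)(-\Delta)^s\tilde u$. You instead propose to rederive this identity by hand (truncation away from the diagonal, integration by parts in the singular double integral, extraction of the boundary constant via a half-space model), citing Ros-Oton--Serra \cite{ROS} as the template.

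There is a genuine gap here: the Ros-Oton--Serra Pohozaev identity treats only the dilation field $\psi(x)=x$, and its proof uses the scaling structure of that particular field in an essential way. For the lemma at hand one needs the identity for \emph{arbitrary} $\psi\in C^1(\R^n,\R^n)$, and the passage from $\psi=x$ to general $\psi$ is exactly the nontrivial extension carried out in \cite{DFW}; it does not follow from \cite{ROS} by routine localization. Moreover, the steps you flag as the "main obstacle" (the regularization near the diagonal, the survival of the boundary flux with the correct constant $\Gamma^2(1+s)$, the use of the regularity $\tilde u/\delta^s\in C^\alpha(\overline\Omega)$) are not a finishing technicality but the entire content of the cited theorem, so "filling them in" would amount to writing a substantial appendix reproducing \cite{DFW}. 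Your sketch of that derivation is in the right spirit, but as written it would not close the proof: you either need to cite the general-field Pohozaev identity of \cite{DFW}, as the paper does, or actually carry out that proof, for which \cite{ROS} alone is not sufficient.
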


\begin{proof}
If $\|\psi\|_{C^{1}}$ is small, by
direct computation we have that 
\begin{multline}
\left(\mathcal{B}_{s}^{\psi}\right)'(0)[\psi](\tilde{u},\tilde{v})=\\
\frac{1}{2}\int_{\mathbb{R}^{n}}\int_{\mathbb{R}^{n}}
\frac{(\tilde{u}(\eta)-\tilde{u}(\xi))(\tilde{v}(\eta)-\tilde{v}(\xi))}{|\xi-\eta|^{n+2s}}
\left\{\mathrm{div}\psi(\xi)+\mathrm{div}\psi(\eta)
-\frac{(n+2s)(\xi-\eta)\cdot(\psi(\xi)-\psi(\eta))}{|\xi-\eta|^2}\right\}d\xi d\eta\\
=\int_{\mathbb{R}^{n}}\int_{\mathbb{R}^{n}}
{(\tilde{u}(\eta)-\tilde{u}(\xi))(\tilde{v}(\eta)-\tilde{v}(\xi))}
K(\xi,\eta)d\xi d\eta,
\label{eq:derB}
\end{multline}
where 
$$
K(\xi,\eta):=\frac{1}{2}\left\{\mathrm{div}\psi(\xi)+\mathrm{div}\psi(\eta)
-\frac{(n+2s)(\xi-\eta)\cdot(\psi(\xi)-\psi(\eta))}{|\xi-\eta|^2}\right\}
\frac 1{|\xi-\eta|^{n+2s}}.
$$
At this point we use the result of Theorem 1.3 of \cite{DFW} which allows to compute integrals
of the form of (\ref{eq:derB}) and we obtain the conclusion.
\end{proof}

We want to apply the previous result to eigenfunctions of $(-\Delta)^s$ on $\Omega$ with 
Dirichlet boundary conditions. We recall that, by Remark \ref{rem:i*-0}, 
this is equivalent to consider eigenfunctions of the operator $T_0$.

\begin{cor}\label{lem:Diff-cB}
Let $u,v\in\mathcal{H}_{0}^{s}(\Omega)$ satisfy 
$ T_0{u}=\frac 1{\lambda_0} {u}$, and $ T_0{v}=\frac 1{\lambda_0} {v}$. Then  we have 
$$
\left(\mathcal{B}_{s}^{\psi}\right)'(0)[\psi](T_0 u, v)=-\frac{\G^2(1+s)}{\lambda_0}\int_{\partial\O}\frac{ u}{\d^s}\frac{ v}{\d^s}\,\psi  \cdot N\,d\sigma+ \int_{\O}  uv\mathrm{div}(\psi)dx.
$$
\end{cor}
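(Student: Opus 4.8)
The plan is to deduce the corollary from Lemma~\ref{lemma20} by a direct substitution, after checking that eigenfunctions meet the regularity hypotheses of that lemma. First I would recall that, by Remark~\ref{rem:i*-0}, the hypothesis $T_0 u=\frac{1}{\lambda_0}u$ means precisely that $u\in\mathcal{H}_0^s(\Omega)$ is an eigenfunction of $(-\Delta)^s$ with Dirichlet exterior condition and eigenvalue $\lambda_0$, so that $(-\Delta)^s u=\lambda_0 u$ in $\Omega$ (and likewise for $v$). Since $u,v\in\mathcal{H}_0^s(\Omega)\subset L^2(\Omega)$ and are actually bounded (by the standard $L^\infty$ bound for Dirichlet eigenfunctions of the fractional Laplacian; see the references in the introduction), we get $(-\Delta)^s u=\lambda_0 u\in L^\infty(\Omega)$, and interior elliptic regularity for $(-\Delta)^s$ gives $(-\Delta)^s u, (-\Delta)^s v\in C^\alpha_{\mathrm{loc}}(\Omega)$ for $\alpha>(1-2s)_+$. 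Hence the pair $(u,v)$ satisfies the hypotheses of Lemma~\ref{lemma20}.

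Next I would apply Lemma~\ref{lemma20} with $\tilde u=u$ and $\tilde v=v$. The boundary term produced is exactly $-\Gamma^2(1+s)\int_{\partial\Omega}\frac{u}{\delta^s}\frac{v}{\delta^s}\,\psi\cdot N\,d\sigma$, which is already in the desired form; note that the boundary integrand makes sense because $u/\delta^s,v/\delta^s$ extend continuously up to $\partial\Omega$ by the fine boundary regularity theory for $(-\Delta)^s$ (Ros-Oton--Serra type estimates), which is implicit in the statement of Lemma~\ref{lemma20}. For the interior term, substitute $(-\Delta)^s u=\lambda_0 u$ and $(-\Delta)^s v=\lambda_0 v$:
\[
-\int_\Omega\bigl[\nabla u\cdot\psi\,(-\Delta)^s v+\nabla v\cdot\psi\,(-\Delta)^s u\bigr]dx
=-\lambda_0\int_\Omega\bigl[v\,\nabla u\cdot\psi+u\,\nabla v\cdot\psi\bigr]dx
=-\lambda_0\int_\Omega\nabla(uv)\cdot\psi\,dx.
\]
An integration by parts (legitimate since $uv$ vanishes on $\partial\Omega$ in the appropriate trace sense, or by approximation) turns this into $\lambda_0\int_\Omega uv\,\mathrm{div}(\psi)\,dx$. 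Thus
\[
\bigl(\mathcal{B}_s^\psi\bigr)'(0)[\psi](u,v)=-\Gamma^2(1+s)\int_{\partial\Omega}\frac{u}{\delta^s}\frac{v}{\delta^s}\,\psi\cdot N\,d\sigma+\lambda_0\int_\Omega uv\,\mathrm{div}(\psi)\,dx.
\]

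Finally I would pass from $\bigl(\mathcal{B}_s^\psi\bigr)'(0)[\psi](u,v)$ to $\bigl(\mathcal{B}_s^\psi\bigr)'(0)[\psi](T_0 u,v)$. Since $\bigl(\mathcal{B}_s^\psi\bigr)'(0)[\psi]$ is bilinear and $T_0 u=\frac{1}{\lambda_0}u$, we simply have $\bigl(\mathcal{B}_s^\psi\bigr)'(0)[\psi](T_0 u,v)=\frac{1}{\lambda_0}\bigl(\mathcal{B}_s^\psi\bigr)'(0)[\psi](u,v)$, and substituting the formula just obtained gives exactly the claimed identity
\[
\bigl(\mathcal{B}_s^\psi\bigr)'(0)[\psi](T_0 u,v)=-\frac{\Gamma^2(1+s)}{\lambda_0}\int_{\partial\Omega}\frac{u}{\delta^s}\frac{v}{\delta^s}\,\psi\cdot N\,d\sigma+\int_\Omega uv\,\mathrm{div}(\psi)\,dx.
\]
The only genuinely delicate point is justifying the integration by parts $\int_\Omega\nabla(uv)\cdot\psi=-\int_\Omega uv\,\mathrm{div}\psi$ together with the continuity of $u/\delta^s$ up to the boundary that makes the boundary term well defined; both rely on the boundary regularity of Dirichlet solutions of the fractional Laplacian, but these are exactly the ingredients already used in (and behind) Lemma~\ref{lemma20}, so no new machinery is needed. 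Everything else is a formal substitution using the eigenvalue equation.
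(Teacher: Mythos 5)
Your proof is correct and follows the same route as the paper: verify the regularity hypotheses of Lemma~\ref{lemma20} via elliptic regularity, substitute the eigenvalue relations $(-\Delta)^s u=\lambda_0 u$, $(-\Delta)^s v=\lambda_0 v$ from Remark~\ref{rem:i*-0}, and integrate by parts to convert $-\int_\Omega \nabla(uv)\cdot\psi$ into $\int_\Omega uv\,\mathrm{div}\psi$. The only cosmetic difference is that you apply Lemma~\ref{lemma20} to the pair $(u,v)$ and then extract the factor $1/\lambda_0$ by bilinearity, whereas the paper plugs $(T_0u,v)$ directly into the lemma; these are the same computation.
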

\begin{proof} 
By elliptic regularity the eigenfunctions belongs to $C^\alpha_{\textrm{loc}}(\O)\cap L^\infty(\O)$ with $\alpha>(1-2s)_+$. Then, by Lemma \ref{lemma20} we have
\begin{align*}
\left(\mathcal{B}_{s}^{\psi}\right)'(0)[\psi](T_0 u, v)= &-\frac{\G^2(1+s)}{\lambda_0}\int_{\partial\O}\frac{ u}{\d^s}\frac{ v}{\d^s}\,\psi\cdot N\,d\sigma
\\
&-\frac1{\lambda_0}\int_{\O}\n  u\cdot \psi\Ds  v\,dx -\frac1{\lambda_0} \int_{\O}\n  v\cdot\psi\Ds  u\,dx
\end{align*} 
Combining this with Remark \ref{rem:i*-0} and integration by parts, we obtain
\begin{align*}
\left(\mathcal{B}_{s}^{\psi}\right)'(0)&[\psi](T_0 u, v)=
-\frac{\G^2(1+s)}{\lambda_0}\int_{\partial\O}\frac{ u}{\d^s}\frac{ v}{\d^s}\,\psi\cdot N\,d\sigma-\int_{\O}\n  u\cdot \psi  v\,dx-\int_{\O}\n  v\cdot\psi  u\,dx
\\
&=-\frac{\G^2(1+s)}{\lambda_0}\int_{\partial\O}\frac{ u}{\d^s}\frac{ v}{\d^s}\,\psi\cdot N\,d\sigma
 +\int_{\O}  {uv}\text{div}(\psi)dx,
\end{align*}
as desired.
\end{proof}

Now we apply Theorem \ref{thm:astratto} to the operator $T_{\psi}$ defined
in (\ref{eq:Tpsi}). This is the fundamental block to prove Theorem \ref{main}.

Let $\mu_0$ be an eigenvalue of $T_0=E_\O=(i^{*}\circ i)_{\Omega}$ which has multiplicity $\nu>1$. If for all $\psi$ with
$\|\psi\|_{ C^1 }$ small, the operator $T_\psi$ has an eigenvalue $\mu(\psi)$  has 
multiplicity $\nu$ for all $\psi$ and such that $\mu(\psi)\rightarrow \mu_0$ while $\psi\rightarrow 0$, then Theorem \ref{thm:astratto}
yields
\[
\mathcal{B}_{s}^{0}(T_{\psi}'(0)[\psi]\varphi_{i},\varphi_{j})=\rho I
\] 
for some $\rho=\rho(\psi)\in \mathbb{R}$.
 Here $\left\{ \varphi_{i}\right\} _{i=1,\dots,\nu}$ is an orthonormal
base for the eigenspace $\mu(0)$. 
This, in light of (\ref{eq:SommaDerB}) and Corollary \ref{lem:Diff-cB}
can be recast as
\begin{align}\label{eq:split}
\rho \d_{ij}&=-\left(\mathcal{B}_{s}^{\psi}\right)'(0)[\psi](T_{0}\varphi_{i},\varphi_{j})+\int_\Omega
\varphi_{i}\varphi_{j} \mathrm{div}\psi dx \nonumber\\
&={\G^2(1+s)}{\mu_0}\int_{\partial\O}\frac{ \varphi_{i}}{\d^s}\frac{\varphi_{j}}{\d^s}\,\psi  \cdot N\,d\sigma.
\end{align}
So,  for all $\psi$ with
$\|\psi\|_{ C^1 }$ small,
\begin{eqnarray*}
\int_{\partial\O}\frac{ \varphi_{i}}{\d^s}\frac{\varphi_{j}}{\d^s}\,\psi  \cdot N\,d\sigma=0\text{ for }i\neq j;&&
\int_{\partial\O}\left(\frac{ \varphi_{1}}{\d^s}\right)^2\,\psi  \cdot N\,d\sigma=\dots=
\int_{\partial\O}\left(\frac{ \varphi_{\nu}}{\d^s}\right)^2\,\psi  \cdot N\,d\sigma.
\end{eqnarray*}
This implies that $(\frac{\vp_i}{\d^s})^2\equiv 0$ on $\de\O$ for $i=1,\dots,\nu$.  
On the other hand, by the fractional Pohozaev identity (see \cite{ROS} and \cite[formula (1.6)]{DFW}),
\begin{equation*}
{\G^2(1+s)}\int_{\partial\O}\left(\frac{\varphi_i}{\d^s}\right)^2 x \cdot N\,d\sigma=
\frac{2s}{\mu_0}\int_{\O}\varphi_i^2dx = \frac{2s}{\mu_0}\not=0.
\end{equation*}
This leads to a contradiction and thus    $T_\psi$ cannot have multiplicity $\nu$ for all $\psi$ with
$\|\psi\|_{ C^1 }$ small.
This fact can be summarized in the next proposition, 
which is the main tool to prove Theorem \ref{thm:main-0}.

\begin{prop}
\label{thm:main-tool}Let $\bar{\lambda}$ an eigenvalue of
the operator $(-\Delta)_{\Omega}^{s}$ with Dirichlet boundary condition which has
multiplicity $\nu>1$. Let $U$ and open bounded interval such
that 
\[
\bar{U}\cap\sigma\left((-\Delta)_{\Omega}^{s}\right)=\left\{ \bar{\lambda}\right\} ,
\]
where $\sigma\left((-\Delta)_{\Omega}^{s}\right)$ is the spectrum
of $(-\Delta)_{\Omega}^{s}$. 

Then, there exists $\psi\in C^{1}(\mathbb{R}^{n},\mathbb{R}^{n})$
such that for $\ensuremath{\Omega_{\psi}=(I+\psi)\Omega}$ it holds
\[
\bar{U}\cap\sigma\left((-\Delta)_{\Omega_{\psi}}^{s}\right)=\left\{ \lambda_{1}^{\Omega_{\psi}},\dots,\lambda_{k}^{\Omega_{\psi}}\right\} ,
\]
where $\lambda_{i}^{\Omega_{\psi}}$ is an eigenvalue of the operator
$(-\Delta)_{\Omega_{\psi}}^{s}$ associated to the set $\Omega_{\psi}$
with Dirichlet boundary condition. Here $k>1$ and the multiplicity
of $\lambda_{i}^{\Omega_{\psi}}$ is $\nu_{i}$ with $\sum_{i=1}^{k}\nu_{i}=\nu$.
\end{prop}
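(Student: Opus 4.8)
The plan is to assemble the tools of this section into a contradiction argument: continuity of the eigenvalues of $T_{\psi}$ fixes the total multiplicity near $\bar\lambda$, Theorem \ref{thm:astratto} produces the splitting relation \eqref{eq:split} if the eigenvalue never splits, and the fractional Pohozaev identity rules this out. First I would set up the bookkeeping. Order the eigenvalues of $(-\Delta)_{\Omega_{\psi}}^{s}$ as $\lambda_{1}(\psi)\le\lambda_{2}(\psi)\le\dots$ and let $k_{0}$ be the first index with $\lambda_{k_{0}}(0)=\bar\lambda$, so that $\bar\lambda=\lambda_{k_{0}}(0)=\dots=\lambda_{k_{0}+\nu-1}(0)$; since $\bar U\cap\sigma((-\Delta)_{\Omega}^{s})=\{\bar\lambda\}$ we have $\lambda_{k_{0}+\nu}(0)>\sup U$ and, if $k_{0}>1$, $\lambda_{k_{0}-1}(0)<\inf U$. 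By the continuity lemma for the eigenvalues $\mu_{k}$ of $E_{\psi}$ — equivalently for the $\lambda_{k}(\psi)=1/\mu_{k}(\psi)$ — there is $\delta>0$ such that for $\|\psi\|_{C^{1}}<\delta$ one has $\lambda_{k_{0}}(\psi),\dots,\lambda_{k_{0}+\nu-1}(\psi)\in U$, $\lambda_{k_{0}+\nu}(\psi)>\sup U$, and (if $k_{0}>1$) $\lambda_{k_{0}-1}(\psi)<\inf U$; by the monotonicity of the ordering it follows that $\bar U\cap\sigma((-\Delta)_{\Omega_{\psi}}^{s})$ consists exactly of $\lambda_{k_{0}}(\psi),\dots,\lambda_{k_{0}+\nu-1}(\psi)$, whose multiplicities therefore sum to $\nu$. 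It remains to exhibit one $\psi$, with $\|\psi\|_{C^{1}}$ arbitrarily small, for which this set is not a single eigenvalue.

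Assume for contradiction that for every $\psi$ with $\|\psi\|_{C^{1}}<\delta$ the operator $T_{\psi}$ has a single eigenvalue $\mu(\psi)$ near $\mu_{0}:=1/\bar\lambda$, necessarily of multiplicity $\nu$, with $\mu(\psi)\to\mu_{0}$ as $\psi\to0$. By Riesz's theorem $T_{\psi}-\mu(\psi)I\in F_{\nu\nu}$, and shrinking $\delta$ this operator lies in the neighbourhood of $T_{0}-\mu_{0}I$ used to define $\tilde M$, so $T_{\psi}\in\tilde M$ for all such $\psi$. Since $T_{\psi}$ and $\mathcal B_{s}^{\psi}$ are Fréchet differentiable at $\psi=0$, Theorem \ref{thm:astratto} applies with $X=\mathcal H_{0}^{s}(\Omega)$, scalar product $\mathcal B_{s}^{0}=\mathcal E_{s}^{\Omega}$, and $\{\varphi_{i}\}_{i=1}^{\nu}$ an $\mathcal E_{s}^{\Omega}$-orthonormal basis of $\ker(T_{0}-\mu_{0}I)$ (spanned, by Remark \ref{rem:i*-0}, by Dirichlet eigenfunctions of $(-\Delta)^{s}$ at $\bar\lambda$): there is $\rho=\rho(\psi)\in\mathbb R$ with $\mathcal B_{s}^{0}(T_{\psi}'(0)[\psi]\varphi_{i},\varphi_{j})=\rho\,\delta_{ij}$. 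Substituting the identity \eqref{eq:SommaDerB} and Corollary \ref{lem:Diff-cB}, the divergence terms cancel and, exactly as in \eqref{eq:split}, one obtains
\[
\rho\,\delta_{ij}=\Gamma^{2}(1+s)\,\mu_{0}\int_{\partial\Omega}\frac{\varphi_{i}}{\delta^{s}}\,\frac{\varphi_{j}}{\delta^{s}}\,\psi\cdot N\,d\sigma\qquad\text{for all }i,j=1,\dots,\nu,
\]
and for all $\psi$ with $\|\psi\|_{C^{1}}<\delta$.

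To finish, I would exploit that $\psi\cdot N|_{\partial\Omega}$ runs over a dense set of boundary functions: the off-diagonal equations force $(\varphi_{i}/\delta^{s})(\varphi_{j}/\delta^{s})\equiv0$ on $\partial\Omega$ for $i\neq j$, while the diagonal ones force $(\varphi_{i}/\delta^{s})^{2}\equiv(\varphi_{1}/\delta^{s})^{2}$ on $\partial\Omega$ for all $i$; since $\nu\ge2$, these two facts together can hold only if $(\varphi_{i}/\delta^{s})^{2}\equiv0$ on $\partial\Omega$ for every $i$, contradicting the fractional Pohozaev identity $\Gamma^{2}(1+s)\int_{\partial\Omega}(\varphi_{i}/\delta^{s})^{2}\,x\cdot N\,d\sigma=\frac{2s}{\mu_{0}}\int_{\Omega}\varphi_{i}^{2}\,dx\ne0$ (see \cite{ROS} and \cite[formula (1.6)]{DFW}). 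Hence some $\psi$ with $\|\psi\|_{C^{1}}$ as small as we wish makes $\bar U\cap\sigma((-\Delta)_{\Omega_{\psi}}^{s})=\{\lambda_{1}^{\Omega_{\psi}},\dots,\lambda_{k}^{\Omega_{\psi}}\}$ with $k\ge2$, and by the first paragraph the multiplicities $\nu_{i}$ of the $\lambda_{i}^{\Omega_{\psi}}$ satisfy $\sum_{i=1}^{k}\nu_{i}=\nu$, which is the claim. I expect the main obstacle to be the spectral-perturbation bookkeeping — showing that the total multiplicity inside the relevant spectral window is preserved under small $\psi$ and that a single $\nu$-fold eigenvalue really places $T_{\psi}$ on the manifold $\tilde M$ where Theorem \ref{thm:astratto} is available; once that is secured, the Pohozaev identity closes the contradiction in one line.
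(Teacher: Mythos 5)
Your proposal is correct and follows essentially the same route as the paper: assume for contradiction that the eigenvalue never splits, place $T_\psi$ on the manifold $\tilde M$, invoke Theorem \ref{thm:astratto} together with \eqref{eq:SommaDerB} and Corollary \ref{lem:Diff-cB} to obtain the boundary-integral relation \eqref{eq:split}, deduce $(\varphi_i/\delta^s)^2\equiv 0$ on $\partial\Omega$, and contradict the fractional Pohozaev identity. You merely spell out two points the paper leaves implicit — the spectral-window bookkeeping showing the total multiplicity inside $\bar U$ is conserved under small $\psi$, and the elementary derivation of $(\varphi_i/\delta^s)^2\equiv 0$ from the off-diagonal and equal-diagonal conditions — both of which are sound.
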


We recall that if $\|\psi\|_{C^1}$ is small, the multiplicity of an eigenvalue $\lambda^{\O_\psi}$ near $\bar{\lambda}$ can only be equal or smaller than the multiplicity of $\bar{\lambda}$. Here, in Proposition \ref{thm:main-tool}, 
we proved the existence of perturbations for which the multiplicity is strictly smaller.

The next corollary follows from Proposition \ref{thm:main-tool}, composing
a finite number of perturbations.
\begin{cor}\label{cor:auto-semplice-0}
There exists $\psi\in C^{1}(\mathbb{R}^{n},\mathbb{R}^{n})$ such
that for $\ensuremath{\Omega_{\psi}=(I+\psi)\Omega}$ it holds
\[
\bar{U}\cap\sigma\left((-\Delta)_{\Omega_{\psi}}^{s}\right)=\left\{ \lambda_{1}^{\Omega_{\psi}},\dots,\lambda_{\nu}^{\Omega_{\psi}}\right\},
\]
where $\lambda_{i}^{\Omega_{\psi}}$ is a simple eigenvalue of the
operator $(-\Delta)_{\Omega_{\psi}}^{s}$ associated to the set $\Omega_{\psi}$
with Dirichlet boundary condition.
\end{cor}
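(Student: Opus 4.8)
The plan is to derive Corollary~\ref{cor:auto-semplice-0} from Proposition~\ref{thm:main-tool} by an iterative construction, composing finitely many small domain perturbations so as to split, one after another, every multiple eigenvalue lying in $\bar U$. The starting observation is the stability fact recalled just before the corollary: if $\|\psi\|_{C^1}$ is small enough, then $\sigma((-\Delta)^s_{\Omega_\psi})\cap\bar U$ consists of finitely many eigenvalues whose multiplicities sum to $\nu$ (this is exactly the content of Proposition~\ref{thm:main-tool} together with the continuity Lemma in Section~\ref{domain}, which guarantees that no eigenvalue leaves or enters $\bar U$ under a sufficiently small perturbation, $\bar U$ being chosen with $\bar\lambda$ in its interior and $\partial U$ away from the spectrum). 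In particular, the number of eigenvalues in $\bar U$ is at most $\nu$, and an eigenvalue in $\bar U$ is simple for the perturbed domain as soon as its multiplicity has been reduced to $1$.

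The iteration goes as follows. Set $\Omega_0:=\Omega$ and $U_0:=U$. If every eigenvalue in $\bar U_0$ is already simple, we are done with $\psi=0$. Otherwise pick one eigenvalue $\bar\lambda_1\in\bar U_0$ of multiplicity $\nu_1>1$, and choose a small open interval $U_1\subset U_0$ isolating $\bar\lambda_1$ in the spectrum of $(-\Delta)^s_{\Omega_0}$. Apply Proposition~\ref{thm:main-tool} on $\Omega_0$ with this $\bar\lambda_1$ and $U_1$: it produces $\psi_1$, with $\|\psi_1\|_{C^1}$ as small as we wish, such that $\bar U_1\cap\sigma((-\Delta)^s_{(I+\psi_1)\Omega_0})$ consists of $k_1>1$ eigenvalues of multiplicities summing to $\nu_1$, hence each strictly smaller than $\nu_1$. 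By choosing $\|\psi_1\|_{C^1}$ small enough we also ensure, via the continuity of eigenvalues, that outside $\bar U_1$ nothing has changed as far as membership in $\bar U_0$ is concerned, and that the eigenvalues of $(I+\psi_1)\Omega_0$ in $\bar U_0\smallsetminus\bar U_1$ keep the multiplicities they had for $\Omega_0$. Thus the new domain $\Omega_1:=(I+\psi_1)\Omega_0$ has strictly fewer eigenvalues of multiplicity $>1$ in $\bar U_0$, counted with multiplicity, than $\Omega_0$ did — more precisely, the quantity $\sum\{\nu_j-1 : \lambda_j\in\bar U_0,\ \nu_j>1\}$ has strictly decreased.

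Repeating this, at each stage we compose the previously obtained perturbation with a new small one, writing the cumulative map as $I+\Psi_m$ where $I+\Psi_m=(I+\psi_m)\circ\cdots\circ(I+\psi_1)$; since $C^1$-small perturbations compose to a $C^1$-small perturbation (and the composition is again of the form $I+\psi$ with $\psi\in C^1(\mathbb R^n,\mathbb R^n)$, invertible near $\Omega$ for $\|\psi\|_{C^1}<1$), and since the monovariant integer $\sum\{\nu_j-1\}$ strictly decreases, the process terminates after finitely many steps — at most $\nu-1$ of them. When it stops, every eigenvalue of $(-\Delta)^s_{\Omega_\psi}$ inside $\bar U$ is simple, with $\psi:=\Psi_m$, and one more appeal to the continuity lemma shows the total number of such eigenvalues is exactly $\nu$, giving the stated conclusion $\bar U\cap\sigma((-\Delta)^s_{\Omega_\psi})=\{\lambda_1^{\Omega_\psi},\dots,\lambda_\nu^{\Omega_\psi}\}$ with all $\lambda_i^{\Omega_\psi}$ simple.

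The routine but slightly delicate point — the only real "obstacle" — is bookkeeping the smallness of the perturbations and the stability of the spectral picture under composition: at each step the allowed size of $\psi_{m+1}$ depends on the spectral gaps of $\Omega_m$ (so that the new perturbation does not disturb the already-simplified eigenvalues or move eigenvalues across $\partial U$), and one must check that after finitely many steps the cumulative $\|\Psi_m\|_{C^1}$ is still below the prescribed $\varepsilon$; this is handled by shrinking the step sizes geometrically, e.g. requiring $\|\psi_{m}\|_{C^1}<\varepsilon/2^{m}$ together with whatever step-dependent threshold the continuity lemma and Proposition~\ref{thm:main-tool} demand. No new analytic input is needed beyond Proposition~\ref{thm:main-tool} and the continuous dependence of the $\mu_k$ on $\psi$ established in Section~\ref{domain}.
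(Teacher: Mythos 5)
Your proof is correct and elaborates exactly the paper's one-line justification that the corollary ``follows from Proposition \ref{thm:main-tool}, composing a finite number of perturbations'': the iteration, the decreasing integer monovariant $\sum(\nu_j-1)$, the use of continuity to keep the spectral mass in $\bar U$ equal to $\nu$, and the geometric shrinking of step sizes to control the cumulative $C^1$ norm are all the right ingredients. One minor imprecision: you cannot force the untouched eigenvalues in $\bar U_0\smallsetminus\bar U_1$ to \emph{keep} their multiplicities under a small perturbation (they might split further), but since multiplicities can only decrease under small perturbations --- as the paper notes right before the corollary --- your monovariant still strictly decreases and the termination argument is unaffected.
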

At this point we are in position to prove the main result of this paper.

\section{Proof of Theorem \ref{thm:main-0}}\label{Sec.mainproof}

We start proving the following splitting property for a finite number
of multiple eigenvalues.
\begin{lem}
\label{thm:itera-finita}Given a sequence $\left\{ \sigma_{l}\right\} $
of positive real numbers there exists
\begin{itemize}
\item a sequence of bijective map  $\left\{ F_{l}\right\} \in C^{1}(\mathbb{R}^{n},\mathbb{R}^{n})$, $F_l=(I+\psi_l)$
with $\|\psi_{l}\|_{C^{1}}\le\sigma_{l}$
\item a sequence of open bounded $C^1$ sets with $\Omega_0=\Omega$ and $\Omega_l=F_l(\Omega_{l-1})$ 
\item a sequence of increasing integer numbers $\left\{ q_{l}\right\} $
with $q_{l}\nearrow+\infty$
\item a sequence of open bounded intervals $\left\{ U_{t}\right\} _{t=1,\dots,q_{l}}$
with $\bar{U}_{i}\cap\bar{U}_{j}=\emptyset$ for $i\neq j$
\end{itemize}
such that the eigenvalues $\lambda_{i}^{\O_l}$ of
the operator $(-\Delta)_{\Omega_{l}}^{s}$ are
simple for $i=1,\dots,q_{l}$ and $\lambda_{i}^{\O_l}\in U_{i}$
for all $i=1,\dots,q_{l}$.
\end{lem}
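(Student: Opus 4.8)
The proof is an induction on $l$, at each step combining Corollary~\ref{cor:auto-semplice-0} with the continuity of eigenvalues (the Lemma following (\ref{eq:SommaDerB}), i.e. continuity of $\mu_k$ at $0$ in $\psi$). The idea is that at stage $l$ we have a domain $\Omega_{l-1}$ for which the first $q_{l-1}$ eigenvalues are already simple and separated by the intervals $U_1,\dots,U_{q_{l-1}}$; we want to produce a small perturbation $F_l=I+\psi_l$ that, \emph{first}, makes the next batch of eigenvalues simple as well, and, \emph{second}, does so with $\psi_l$ small enough that the already-achieved simplicity on $U_1,\dots,U_{q_{l-1}}$ is not destroyed. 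The increasing sequence $\{q_l\}$ and the new intervals are then chosen to trap the newly-simplified eigenvalues.

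\textbf{The induction step in detail.} Suppose $\Omega_{l-1}$, the integer $q_{l-1}$ and pairwise disjoint closed intervals $\bar U_1,\dots,\bar U_{q_{l-1}}$ have been constructed with $\lambda_i^{\Omega_{l-1}}\in U_i$ simple for $i\le q_{l-1}$. Pick any integer $q_l>q_{l-1}$ with $q_l\ge l$ (to force $q_l\nearrow\infty$), and pick a bounded open interval $\widehat U$ containing $\lambda_{q_{l-1}+1}^{\Omega_{l-1}},\dots,\lambda_{q_l}^{\Omega_{l-1}}$ but disjoint from $\bar U_1,\dots,\bar U_{q_{l-1}}$ and from $\lambda_{q_l+1}^{\Omega_{l-1}}$ (possible since the spectrum is discrete). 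Inside $\widehat U$ there are finitely many distinct eigenvalues, each of finite multiplicity; applying Corollary~\ref{cor:auto-semplice-0} to each of them in turn — or rather its iterated form, composing finitely many perturbations as in the remark preceding it — yields a perturbation making every eigenvalue of $(-\Delta)^s_{\Omega_{l-1}}$ in $\widehat U$ simple. Call the resulting map $I+\psi_l$ and $\Omega_l=(I+\psi_l)(\Omega_{l-1})$.

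\textbf{Controlling the size of $\psi_l$.} The key point is that Corollary~\ref{cor:auto-semplice-0} (and Proposition~\ref{thm:main-tool}) produces a perturbation of \emph{arbitrarily small} $C^1$-norm: inspecting their proofs, the perturbation $\psi$ that splits a multiple eigenvalue can be taken with $\|\psi\|_{C^1}$ as small as we like, since the splitting criterion (\ref{eq:split}) only requires a $\psi$ violating the relations, and such $\psi$ exist in any $C^1$-neighbourhood of $0$ (indeed one may rescale). Hence we may demand $\|\psi_l\|_{C^1}\le\sigma_l$, and moreover small enough that, by the continuity lemma applied on $\Omega_{l-1}$, each of the already-simple eigenvalues $\lambda_i^{\Omega_l}$, $i\le q_{l-1}$, stays in $U_i$; since simplicity is an open condition under small perturbation (a multiple eigenvalue cannot appear from simple ones under a small perturbation, as recalled after Proposition~\ref{thm:main-tool}), these remain simple. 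Finally, on $\widehat U$ we now have $q_l-q_{l-1}$ simple eigenvalues which we separate by new disjoint subintervals $U_{q_{l-1}+1},\dots,U_{q_l}\subset\widehat U$, chosen small enough to isolate each one; taking $\|\psi_l\|_{C^1}$ smaller still (using continuity once more) we guarantee $\lambda_i^{\Omega_l}\in U_i$ for $i=q_{l-1}+1,\dots,q_l$. This closes the induction, and the base case $l=0$ is $\Omega_0=\Omega$ with $q_0=0$.

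\textbf{Main obstacle.} The delicate bookkeeping is the simultaneous control of (i) making new eigenvalues simple and (ii) not spoiling the old intervals; both are handled by the fact that all quantities involved (eigenvalues, and the splitting perturbation) depend continuously on $\psi$ and the splitting $\psi$ can be chosen arbitrarily small, so one fixes the finitely many target intervals first and then shrinks $\|\psi_l\|_{C^1}$ enough. The only genuinely substantive input is that Corollary~\ref{cor:auto-semplice-0} can be realized with arbitrarily small $\psi$ — which we have already established above, since its whole proof rests on the freedom to choose $\psi$ violating (\ref{eq:split}) inside any neighbourhood of the origin.
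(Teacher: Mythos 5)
Your proof follows essentially the same iterative scheme as the paper's: apply Corollary~\ref{cor:auto-semplice-0} to split the next multiple eigenvalue(s) inside an interval isolated from the already-settled ones, use the arbitrary $C^1$-smallness of the splitting perturbation together with continuity of the eigenvalues to keep the previously separated simple eigenvalues in their intervals, and then choose fresh disjoint intervals around the newly simple eigenvalues before iterating. One small wrinkle: the final clause about ``taking $\|\psi_l\|_{C^1}$ smaller still'' to guarantee $\lambda_i^{\Omega_l}\in U_i$ for $i=q_{l-1}+1,\dots,q_l$ is superfluous and, read literally, out of order -- those intervals are chosen around the eigenvalues of the already-fixed $\Omega_l$, so the containments hold by construction, and shrinking $\psi_l$ afterwards would move the eigenvalues rather than help.
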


\begin{proof}
Take $q\in\mathbb{N}$ such that that $\lambda_{1},\dots,\lambda_{q}$
are simple eigenvalues for $(-\Delta)_{\Omega}^{s}$ and that $\lambda_{q+1}$
is the first eigenvalue with multiplicity $\nu_{q+1}$. For $t=1,\dots,q$
let $ U_{t} $ be open intervals such that $\bar{U}_{i}\cap\bar{U}_{j}=\emptyset$
for $i\neq j$ and $\lambda_{t}\in U_{t}$. Let us take $W$ an
open interval such that $\bar{W}\cap\bar{U}_{t}=\emptyset$ for all
$t=1,\dots,q$ and $\bar{W}\cap\sigma((-\Delta)_{\Omega}^{s})=\left\{ \lambda_{q+1}\right\} $.
At this point, by Corollary \ref{cor:auto-semplice-0} we can choose
\textbf{$\bar{\psi}$ }such that $\bar{W}\cap\sigma((-\Delta)_{\Omega_{\bar\psi}}^{s})I$
contains exactly $\nu_{q+1}$ simple eigenvalues. Also, we can choose
a number $\sigma_{q+1}$ sufficiently small, with $\|\bar{\psi}\|_{C^{1}}\le\sigma_{q+1}$
so that $\lambda_{t}^{\bar{\psi}}\in U_{t}$ for all $t=1,\dots,q$,
since the eigenvalues depends continuously on $\psi$. At this point,
by iterating this procedure a finite number of times we get the proof. 
\end{proof}
At this point we are in position to prove the first result of our
paper

\begin{proof}[Proof of Theorem \ref{thm:main-0}]
 Let us take a sequence $\left\{ \sigma_{l}\right\} $ with $0<\sigma_{l}<\frac{1}{4^{l}}$,
and a sequence $F_l=(1+\psi_{l})$ associated to $\sigma_{l}$ as in the previous
theorem.
We set 
$$
\mathcal{F}_l=F_l\circ F_{l-1}\circ \dots \circ F_1.
$$
We can prove that, by the choice of $\sigma_{l}$, the sequence $\{\mathcal{F}_l-I\}_l$ converges to some function 
$\bar{\psi}$ in $C^{1}(\mathbb{R}^{n},\mathbb{R}^{n})$. 
In fact, by the previous lemma we have
\begin{eqnarray}
\|\mathcal{F}_{i+1}-\mathcal{F}_{i}\|_\infty&\le&\|\psi_{i+1}\|_{C^1}<\left(\frac14\right)^{i+1}\\
\|\mathcal{F}'_{i+1}-\mathcal{F}'_{i}\|_\infty&\le&\|\psi_{i+1}\|_{C^1}\|\mathcal{F}'_{i}\|_\infty
\le\left(\frac14\right)^{i+1}\|\mathcal{F}'_{i}\|_\infty.
\label{derF}
\end{eqnarray}
By induction, using \ref{derF}, we can prove that
\begin{equation}
\|\mathcal{F}'_{i}\|_\infty\le \left(1+\frac14\right)^{i}\le \left(\frac54\right)^{i}
\end{equation}
and, combining all these equation, that
\begin{eqnarray}
\|\mathcal{F}_{i+1}-\mathcal{F}_{i}\|_{C^1}&\le&\|\psi_{i+1}\|_{C^1}\le\left(\frac14\right)^{i+1}\left(\frac54\right)^{i}
\end{eqnarray}
and, by iterating, that, for all $p\in \mathbb{N}$
\begin{eqnarray}
\|\mathcal{F}_{i+p}-\mathcal{F}_{i}\|_{C^1}&\le&\|\psi_i\|_{C^1}
\le\sum_{t=0}^p\left(\frac14\right)^{i+t+1}\left(\frac54\right)^{i+t}\nonumber\\
&\le& \frac14\left(\frac5{16}\right)^{i}\sum_{t=0}^p\left(\frac5{16}\right)^{t}\rightarrow 0 \text{ as }i\rightarrow\infty.
\label{serie}
\end{eqnarray}
Thus the sequence $\{\mathcal{F}_{i}-I\}$ converges in $C^1$ to some $\bar{\psi}=\bar{\mathcal{F}}-I$ and, by (\ref{serie}), 
$\|\bar{\psi}\|_{C^1}\le 1/2$, so $\bar{\mathcal{F}}$ is invertible.

We claim
that all the eigenvalues $(-\Delta)_{\Omega_{\bar{\psi}}}^{s}$ are simple.
By contradiction, suppose that there exists a $\bar{q}$ such that
$\lambda_{\bar{q}}^{\bar{\psi}}$ is the first multiple eigenvalue. Let us call $\O_l=\mathcal{F}_l(\O)$ and 
$\{\lambda^{\O_l}_i\}_i$ the eigenvalues of $(-\Delta)_{\Omega_{l}}^{s}$  on $\O_l$ with Dirichlet boundary conditions. 
By Theorem \ref{thm:itera-finita} we have that there exists an $l\in\mathrm{N}$ such that 
$(-\Delta)_{\Omega_{l}}^{s}$ has the first $\bar{q}+1$ eigenvalues simple, and that there exists
$U_{1},\dots,U_{\bar{q}+1}$ open intervals, with disjoint closure,
such that $\lambda_{t}^{\O_l}\in U_{t}$
for $t=1,\dots,\bar{q}+1$. On the one hand, 
$\lambda_{\bar{q}}^{\O_N}\rightarrow\lambda_{\bar{q}}^{\bar{\psi}}$
as well as $\lambda_{\bar{q}+1}^{\O_N}\rightarrow\lambda_{\bar{q}}^{\bar{\psi}}$
when $N\rightarrow\infty$ by continuity of the eigenvalues. On the
other hand, $\lambda_{\bar{q}}^{\O_N}\in U_{\bar{q}}$
and $\lambda_{\bar{q}+1}^{\O_N}\in U_{\bar{q}+1}$
for all $N$, by Theorem \ref{thm:itera-finita}. So 
$\lambda_{\bar{q}}^{\bar{\psi}}=\lambda_{\bar{q}+1}^{\bar{\psi}}\in\bar{U}_{\bar{q}}\cap\bar{U}_{\bar{q}+1}$
which leads us to a contradiction, and the theorem is proved.
\end{proof}

\section{Proof of Theorem \ref{main}}\label{sec:Thm2}
In this case we call
\[
\mathcal{B}^{a}(u,v)=\mathcal{E}(u,v)+\int_{\mathbb{R}^{n}}au^{2}dx.
\]
and, by the hyphothesis on $a$, we can endow $\mathcal{H}_{0}^{s}(\Omega)$
with the norm 
\[
\|u\|_{\mathcal{H}_{0}^{s}(\Omega)}^{2}=\mathcal{B}^{a}(u,u)=\mathcal{E}(u,u)+\int_{\mathbb{R}^{n}}au^{2}dx.
\]
We call $\varphi^{a}\in\mathcal{H}_{0}^{s}(\Omega)$ an eigenfunction of $\left((-\Delta)^s+a\right)$
corresponding to the eigenvalue $\lambda^{a}$. 
Given the embedding $i:\mathcal{H}_{0}^{s}(\Omega)\rightarrow L^{2}(\Omega)$
we consider its adjoint operator, with
respect to the scalar product $\mathcal{B}^{a}$, 
\[
i^{*}:L^{2}(\Omega)\rightarrow\mathcal{H}_{0}^{s}(\Omega).
\]
It holds
\begin{equation}
\mathcal{B}^{a}\left((i^{*}\circ i)_{a}u,v\right)=\mathcal{E}\left((i^{*}\circ i)_{a}u,v\right)+\int_{\Omega}au(i^{*}\circ i)_{a}v=\int_{\Omega}uv,\label{eq:ii*}
\end{equation}
and, as before, if $\varphi_{k}^{a}\in\mathcal{H}_{0}^{s}(\Omega)$
is an eigenfunction of the fractional Laplacian with eigenvalue $\lambda_{k}^{a}$,
then $\varphi_{k}^{a}$ is an eigenfunction of $(i^{*}\circ i)_{a}$
with eigenvalue $\mu_{k}^{a}:=1/\lambda_{k}^{a}$.

In addiction (\ref{eq:Pb})
admits an ordered sequence of eigenvalues 
\[
0<\lambda_{1}^{a}<\lambda_{2}^{a}\le\lambda_{3}^{a}\le\dots\le\lambda_{k}^{a}\le\dots\rightarrow+\infty
\]
and all the eigenvalues
$\lambda_{k}^{a}$ depends continuously on $a$.

In the following, for $b\in C^{0}(\Omega)$ with $\|b\|_{L^{\infty}}$
small enough we consider $\mathcal{B}^{a+b}$ and $(i^{*}\circ i)_{a+b}$
and we put
\begin{equation}
B_{b}:=\mathcal{B}^{a+b}\text{ and }E_{b}:=(i^{*}\circ i)_{a+b}.\label{eq:notation}
\end{equation}
Similarly to what we proved in Section \ref{Sec.prel} we have the following lemma.
\begin{lem}
\label{lem:Bprimo}The maps $b\mapsto B_{b}$ and $b\mapsto E_{b}$ are differentiable
at $0$ and it holds
\[
(B'(0)[b]u,v)=\int_{\Omega}buv,
\]
\begin{equation}
0=\left(B'(0)[b]E_{0}u,v\right)+B_{0}\left(E'(0)[b]u,v\right).\label{eq:chain}
\end{equation}
for all $u,v\in\mathcal{H}_{0}^{s}(\Omega)$.
\end{lem}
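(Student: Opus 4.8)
The first identity is immediate. Since $B_b(u,v)=\mathcal{E}(u,v)+\int_{\mathbb{R}^n}(a+b)uv\,dx$ and every $u,v\in\mathcal{H}_0^s(\Omega)$ vanishes outside $\Omega$, we have $B_b(u,v)-B_0(u,v)=\int_\Omega b\,uv\,dx$, so $b\mapsto B_b$ is an affine (linear-plus-constant) map from $C^0(\Omega)$ into the space of bounded bilinear forms on $\mathcal{H}_0^s(\Omega)$; it is therefore Fréchet differentiable at $0$ with $(B'(0)[b]u,v)=\int_\Omega b\,uv\,dx$.

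For the differentiability of $b\mapsto E_b$ and for \eqref{eq:chain}, the plan is to reduce the perturbed form to a bounded operator and then invert a Neumann-type series, exactly as with $T_\psi$ in Section \ref{Sec.prel} (here it is even simpler, since there is no change of variables). First I would use the Riesz representation theorem in the Hilbert space $(\mathcal{H}_0^s(\Omega),B_0)$ to introduce, for each $b\in C^0(\Omega)$, the operator $S_b\in L(\mathcal{H}_0^s(\Omega),\mathcal{H}_0^s(\Omega))$ determined by $B_0(S_bw,v)=\int_\Omega b\,wv\,dx$ for all $v$ (concretely $S_b=i^*\circ(b\,\cdot)\circ i$); testing with $v=S_bw$ and using the continuity of $\mathcal{H}_0^s(\Omega)\hookrightarrow L^2(\Omega)$ gives $\|S_b\|\le C\|b\|_{L^\infty}$, so $b\mapsto S_b$ is linear and bounded. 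By construction $B_b(w,v)=B_0((I+S_b)w,v)$. Next, the defining relation \eqref{eq:ii*} with $a$ replaced by $a+b$, namely $B_b(E_bu,v)=\int_\Omega uv=B_0(E_0u,v)$ for all $v$, yields $(I+S_b)E_b=E_0$, hence $E_b=(I+S_b)^{-1}E_0$ whenever $\|b\|_{L^\infty}$ is small enough that $\|S_b\|<1$. Since $T\mapsto(I+T)^{-1}$ is analytic near $0$ in $L(\mathcal{H}_0^s(\Omega),\mathcal{H}_0^s(\Omega))$, the composition $b\mapsto E_b$ is differentiable at $0$, with $E'(0)[b]=-S_bE_0$. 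Substituting this into the left-hand side of \eqref{eq:chain} and using $B_0(S_bE_0u,v)=\int_\Omega b\,(E_0u)v\,dx=(B'(0)[b]E_0u,v)$ makes the two terms cancel, which is exactly \eqref{eq:chain}.

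The only point that needs care — and it is the same one as for $T_\psi$ in Section \ref{Sec.prel} — is passing from the scalar bilinear identity $B_b(E_bu,v)=\int_\Omega uv$ to the operator identity $E_b=(I+S_b)^{-1}E_0$, i.e. the uniform invertibility of $I+S_b$ for small $b$; once this is in place, differentiability and the chain rule \eqref{eq:chain} are formal consequences of the analyticity of operator inversion, and no genuine computation remains. For completeness one should also record, as in Remark \ref{rem:i*-0} and the discussion preceding Theorem \ref{thm:astratto}, that each $E_b$ is compact, injective and self-adjoint with respect to the scalar product $B_b$, and that the eigenfunctions of $(-\Delta)^s+(a+b)$ on $\Omega$ are precisely the eigenfunctions of $E_b$ with reciprocal eigenvalues; this is what permits applying Theorem \ref{thm:astratto} to $T_b:=E_b$ in the remainder of the section.
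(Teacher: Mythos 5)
Your proof is correct, and it is in fact more complete than what the paper offers: the paper does not prove Lemma~\ref{lem:Bprimo} at all, contenting itself with the remark ``Similarly to what we proved in Section~\ref{Sec.prel}'' — and in Section~\ref{Sec.prel} the corresponding differentiability of $T_\psi$ is itself only asserted in a Remark. The route the paper gestures at is to take the identity $B_b(E_b u,v)=\int_\Omega uv$ for granted and differentiate it in $b$ by the product rule, which \emph{produces} \eqref{eq:chain} but presupposes the differentiability of $b\mapsto E_b$. Your argument instead proves that differentiability: you introduce $S_b\in L(\mathcal{H}_0^s(\Omega))$ with $B_0(S_bw,v)=\int_\Omega b\,wv$, note $\|S_b\|\le C\|b\|_{L^\infty}$, rewrite $B_b=B_0\circ(I+S_b)$, and extract from the defining relation the operator identity $(I+S_b)E_b=E_0$, hence $E_b=(I+S_b)^{-1}E_0$ for $\|S_b\|<1$. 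Since $b\mapsto S_b$ is bounded linear and $T\mapsto(I+T)^{-1}$ is analytic near $0$, you get not merely differentiability but real-analyticity of $b\mapsto E_b$, with $E'(0)[b]=-S_bE_0$, and \eqref{eq:chain} follows by substitution. This Neumann-series reduction is genuinely the right tool here, and contrasts with the paper's proof of the analogous Lemma in Section~\ref{sec:Thm3}: there the bilinear form $\mathcal{E}$ is $\beta$-independent, so $E_\beta$ is \emph{affine} in $\beta$ and the direct argument $E_\beta u-E_0u-L(\beta,u)\equiv 0$ works; here $B_b$ itself moves with $b$, $E_b$ is not affine, and your inversion argument is what is actually needed. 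One cosmetic point: you should state explicitly that $B_0$ is an equivalent scalar product on $\mathcal{H}_0^s(\Omega)$ (this is where the hypothesis $\min_{\overline\Omega}a>0$ or $\|a\|_{C^0}$ small enters), since you invoke the Riesz representation theorem with respect to $B_0$; but you clearly have this in mind, as it is set up in the lines just before the lemma.
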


\begin{rem}
Notice that, by Lemma \ref{lem:Bprimo} and by (\ref{eq:chain}),
it holds
\[
-B_{0}\left(E'(0)[b]u,v\right)=\left(B'(0)[b]E_{0}u,v\right)=\int_{\Omega}b(E_{0}u)v=\int_{\Omega}b\left[(i^{*}\circ i)_{a}u\right]v.
\]
\end{rem}

\begin{rem}
\label{rem:spezz}If $\mu^{a}=\mu$ is an eigenvalue of the map $E_{0}=(i^{*}\circ i)_{a}$
with multiplicity $\nu>1$, and $\varphi_{1}^{a},\dots,\varphi_{\nu}^{a}$
are orthonormal eigenvectors associated to $\mu$, then, by the previous
remark we have
\[
\left(B'(0)[b]E_{0}\varphi_{i}^{a},\varphi_{j}^{a}\right)=\int_{\Omega}bE_{0}(\varphi_{i}^{a})\varphi_{j}^{a}=-\mu\int_{\Omega}b\varphi_{i}^{a}\varphi_{j}^{a},
\]
 for all $i,j=1,\dots,\nu$.
\end{rem}

Now we apply the condition (\ref{eq:spezzamentoastratto})
to prove the splitting property for a chosen multiple eigenvalue.

\begin{prop}
\label{thm:main-tool-pert}Let $a\in C^{0}(\mathbb{R}^{n})$ be positive
on $\Omega$ or with $\|a\|_{C^{0}(\Omega)}$ sufficiently small.
Let $\bar{\lambda}$ an eigenvalue of the operator $(-\Delta)_{\Omega}^{s}+aI$
on $\mathcal{H}_{0}^{s}$ with Dirichlet boundary condition with multiplicity
$\nu>1$. Let $U$ and open bounded interval such that 
\[
\bar{U}\cap\sigma\left((-\Delta)_{\Omega}^{s}+aI\right)=\left\{ \bar{\lambda}\right\} ,
\]
where $\sigma\left((-\Delta)_{\Omega}^{s}+aI\right)$ is the spectrum
of $(-\Delta)_{\Omega}^{s}+aI$. 

Then, there exists $b\in C^{0}(\mathbb{R}^{n})$ such that for 
\[
\bar{U}\cap\sigma\left((-\Delta)_{\Omega}^{s}+(a+b)I\right)=\left\{ \lambda_{1}^{b},\dots,\lambda_{k}^{b}\right\} ,
\]
where $\lambda_{i}^{b}$ is an eigenvalue of the operator $(-\Delta)_{\Omega}^{s}+(a+b)I$.
Here $k>1$ and the multiplicity of $\lambda_{i}^{b}$ is $\nu_{i}$
with $\sum_{i=1}^{k}\nu_{i}=\nu$.
\end{prop}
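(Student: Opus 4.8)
The plan is to repeat, in the present setting, the argument that produced Proposition~\ref{thm:main-tool}, with the pair $(\mathcal{B}_s^\psi,T_\psi)$ replaced by the pair $(B_b,E_b)=(\mathcal{B}^{a+b},(i^{*}\circ i)_{a+b})$ of Section~\ref{sec:Thm2}. The key simplification is that here the splitting condition of Theorem~\ref{thm:astratto} produces an integral over $\Omega$ instead of over $\partial\Omega$, so no Pohozaev identity is needed and the contradiction is immediate. Throughout, the Hilbert space $X$ of Section~\ref{sec:abstract-result} is $\mathcal{H}_0^s(\Omega)$ endowed with the scalar product $B_0=\mathcal{B}^a$, which, by the hypothesis on $a$, is equivalent to $\mathcal{E}_s^\Omega$; thus the abstract machinery applies verbatim.

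First I would argue by contradiction and assume that for every $b\in C^{0}(\mathbb{R}^{n})$ with $\|b\|_{C^0}$ small the operator $E_b$ has an eigenvalue $\mu(b)$ of multiplicity exactly $\nu$ with $\mu(b)\to\mu_0:=1/\bar\lambda$ as $b\to0$ (recall that, for small perturbations, the multiplicity of an eigenvalue near $\bar\lambda$ can only stay equal to $\nu$ or decrease, exactly as in the remark following Proposition~\ref{thm:main-tool}, and the total multiplicity in $\bar U$ is preserved because the endpoints of $\bar U$ stay in the resolvent set). By Riesz theory $E_0-\mu_0 I\in F_{\nu\nu}$ and $\ker(E_0-\mu_0 I)\cap\mathrm{Im}(E_0-\mu_0 I)=\{0\}$, so the manifold $\tilde M$ of Section~\ref{sec:abstract-result} is defined near $E_0-\mu_0 I$, and the contradiction hypothesis says precisely that $E_b\in\tilde M$ for all small $b$. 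Hence Theorem~\ref{thm:astratto} applies and yields, for each such $b$, a number $\rho(b)\in\mathbb{R}$ with
\[
B_0\big(E'(0)[b]\varphi_i^a,\varphi_j^a\big)=\rho(b)\,\delta_{ij},\qquad i,j=1,\dots,\nu,
\]
where $\varphi_1^a,\dots,\varphi_\nu^a$ is an $L^2(\Omega)$-orthonormal basis of the eigenspace of $E_0$ associated with $\mu_0$.

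Next I would combine this with Lemma~\ref{lem:Bprimo} and the identity (\ref{eq:chain}) (cf.\ Remark~\ref{rem:spezz}): using the chain rule and the explicit form of $B'(0)[b]$ one gets $B_0(E'(0)[b]\varphi_i^a,\varphi_j^a)=-(B'(0)[b]E_0\varphi_i^a,\varphi_j^a)=-\mu_0\int_\Omega b\,\varphi_i^a\varphi_j^a\,dx$, so the displayed identity becomes
\[
\mu_0\int_\Omega b\,\varphi_i^a\varphi_j^a\,dx=-\rho(b)\,\delta_{ij},\qquad i,j=1,\dots,\nu,
\]
for all $b$ with $\|b\|_{C^0}$ small, hence, by homogeneity in $b$, for all $b\in C^{0}(\mathbb{R}^{n})$. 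Taking $i\neq j$ gives $\int_\Omega b\,\varphi_i^a\varphi_j^a\,dx=0$ for every such $b$, while the diagonal entries give $\int_\Omega b\,\big((\varphi_1^a)^2-(\varphi_i^a)^2\big)\,dx=0$ for every such $b$; therefore $\varphi_i^a\varphi_j^a\equiv0$ and $(\varphi_i^a)^2\equiv(\varphi_j^a)^2$ a.e.\ in $\Omega$. Since $\nu>1$, at a.e.\ point one of $\varphi_1^a,\varphi_2^a$ vanishes, and then so does the other, whence $\varphi_1^a\equiv0$ in $\Omega$, contradicting $\|\varphi_1^a\|_{L^2(\Omega)}=1$.

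Consequently there is an arbitrarily small $b$ for which the multiplicity of the perturbed eigenvalue near $\bar\lambda$ is strictly less than $\nu$. To conclude, I would note that, since the endpoints of $\bar U$ lie in the resolvent set of $(-\Delta)_\Omega^s+aI$ and the eigenvalues depend continuously on the potential (as recalled above), for $\|b\|_{C^0}$ small no eigenvalue of $(-\Delta)_\Omega^s+(a+b)I$ leaves $\bar U$ and their total multiplicity inside $\bar U$ is still $\nu$; choosing $b$ as above, these $\nu$ eigenvalues are split into $k>1$ distinct ones $\lambda_1^b,\dots,\lambda_k^b$ with multiplicities $\nu_1,\dots,\nu_k$ satisfying $\sum_{i=1}^k\nu_i=\nu$, which is the assertion. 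I do not expect a genuine obstacle: the argument is strictly simpler than the domain case, and the only points that need checking are the differentiability statements of Lemma~\ref{lem:Bprimo} together with (\ref{eq:chain}) (available under the hypothesis on $a$) and the fact that $B_0$ is an equivalent scalar product on $\mathcal{H}_0^s(\Omega)$, so that Section~\ref{sec:abstract-result} applies without change.
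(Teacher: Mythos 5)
Your proposal is correct and follows essentially the same route as the paper's proof: contradiction via Theorem~\ref{thm:astratto}, translation through Lemma~\ref{lem:Bprimo} and Remark~\ref{rem:spezz} into $\mu_0\int_\Omega b\,\varphi_i\varphi_j\,dx=\rho(b)\delta_{ij}$, and the immediate conclusion $\varphi_i\varphi_j\equiv0$, $\varphi_i^2\equiv\varphi_j^2$ a.e., which forces $\varphi_1\equiv0$. The few extra details you spell out (the Riesz-theory check that $E_0-\mu_0 I\in F_{\nu\nu}$, the homogeneity-in-$b$ remark, and the resolvent argument keeping total multiplicity in $\bar U$ equal to $\nu$) are tacit in the paper but do not change the argument.
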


The next corollary follows from by the previous propositon, after
composing a finite number of perturbations.
\begin{cor}
\label{cor:auto-semplice}There exists $b\in C^{0}(\mathbb{R}^{n})$
such that
\[
\bar{U}\cap\sigma\left((-\Delta)_{\Omega}^{s}+(a+b)I\right)=\left\{ \lambda_{1}^{b},\dots,\lambda_{\nu}^{b}\right\},
\]
where $\lambda_{i}^{b}$ is a simple eigenvalue of the operator $(-\Delta)_{\Omega}^{s}+(a+b)I$
with Dirichlet boundary condition.
\end{cor}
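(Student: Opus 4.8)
The plan is to obtain $b$ by iterating Proposition \ref{thm:main-tool-pert} finitely many times: at each step one of the remaining multiple eigenvalues lying in $\bar U$ is split, while the part of the spectrum already arranged is left essentially unchanged. The whole bookkeeping is organized by a single decreasing integer quantity attached to the spectrum inside $\bar U$.

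First I would isolate the two elementary facts that make the iteration consistent. The eigenvalues $\lambda_k^{a}$ depend continuously on the coefficient (as recorded at the beginning of Section \ref{sec:Thm2}), and — exactly as in the domain-perturbation case — a perturbation of the coefficient with small $C^0$ norm can only split a given eigenvalue, never increase its multiplicity. Consequently, once a portion of the spectrum has been separated into finitely many eigenvalues sitting inside pairwise disjoint closed subintervals of $U$, any further perturbation of sufficiently small $C^0$ norm keeps each of those eigenvalues inside its own subinterval, does not raise any of their multiplicities, and lets no eigenvalue cross $\partial U$.

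Next I set up the induction. Since $\bar U\cap\sigma\left((-\Delta)_\Omega^s+(a+b_0)I\right)$ is finite (the gap condition together with finite multiplicity), let $m$ denote the largest multiplicity occurring there and $r$ the number of eigenvalues in $\bar U$ realizing this multiplicity $m$. If $m=1$ we are done. Otherwise pick one eigenvalue $\bar\mu$ in $\bar U$ of multiplicity $m$, choose pairwise disjoint closed intervals $U',U_1,\dots,U_p\subset U$ isolating $\bar\mu$ and the other eigenvalues of $\bar U$ respectively, and apply Proposition \ref{thm:main-tool-pert} relative to $U'$. This produces a $C^0$ perturbation $b_1$ (which, if desired, can be taken with arbitrarily small norm) splitting $\bar\mu$ inside $U'$ into $k>1$ eigenvalues of multiplicities $\nu_i$, each $\nu_i<m$ and $\sum_i\nu_i=m$. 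Shrinking $\|b_1\|_{C^0}$ if necessary, all the other eigenvalues of $\bar U$ remain inside their intervals $U_i$ with unchanged multiplicities and none escapes $\bar U$. Hence, counting inside $\bar U$, the pair $(m,r)$ has strictly decreased in lexicographic order — either $r$ drops by one (if the multiplicity $m$ is still attained elsewhere) or $m$ itself decreases. Iterating this finitely many times, $(m,r)$ reaches the value $m=1$, i.e. all eigenvalues in $\bar U$ become simple. Setting $b:=b_1+\dots+b_q$, a finite sum of continuous functions, yields the desired $b\in C^0(\mathbb{R}^n)$; moreover the total multiplicity in $\bar U$, which equals the sum of the multiplicities, is preserved at every step, so at the end there are exactly $\nu$ simple eigenvalues $\lambda_1^b,\dots,\lambda_\nu^b$ in $\bar U$.

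The main obstacle is precisely the consistency of the iteration: ensuring that the perturbation used to split one eigenvalue does not merge, nor increase the multiplicity of, an eigenvalue already made simple at an earlier step, and does not allow any eigenvalue to leave $\bar U$. This is exactly what the ``multiplicity cannot increase under small perturbations'' principle, combined with the continuous dependence of the $\lambda_k^a$ on $a$, guarantees once the relevant eigenvalues have been placed in disjoint isolating subintervals of $U$; choosing each successive norm $\|b_j\|_{C^0}$ small enough — the threshold depending only on the gaps between those intervals and on the distance to $\partial U$ — closes the argument.
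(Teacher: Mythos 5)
Your proposal is correct and takes essentially the same route as the paper, which disposes of this corollary in one line (``follows from the previous proposition, after composing a finite number of perturbations''): you simply make that finite iteration explicit, using continuity of the eigenvalues in the coefficient and the fact that small perturbations cannot raise multiplicities within disjoint isolating subintervals of $U$, together with a terminating (lexicographic, or equivalently distinct-eigenvalue-counting) induction. The only detail worth adding is that each $\|b_j\|_{C^0}$ should also be small enough that $a+b_1+\dots+b_j$ keeps satisfying the positivity/smallness hypothesis of Proposition \ref{thm:main-tool-pert}, which your ``choose each successive norm small enough'' clause covers implicitly.
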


\begin{proof}[Proof of Proposition \ref{thm:main-tool-pert}]
We apply Theorem \ref{thm:astratto} to the operator $E_{b}=(i^{*}\circ i)_{a+b}$
introduced in (\ref{eq:notation}). 

If $\mu^{a+b}$ is an eigenvalue of $E_{b}$ which has multiplicity
$\nu$ at $b=0$ and at any $b$ with $\|b\|_{C^{0}}$ small, then
by condition (\ref{eq:spezzamentoastratto}) of Theorem \ref{thm:astratto}
we have 
\[
B_{0}(E'(0)[b]\varphi_{i},\varphi_{j})=\rho\delta_{ij}\text{ for some }\rho\in\mathbb{R},
\]
where $\left\{ \varphi_{i}\right\} _{i=1,\dots,\nu}$ is an $L^{2}$-orthonormal
basis for the eigenspace relative to $\mu^{a}$. Then, in light of Remark \ref{rem:spezz}, we should have
that for any $b\in C^{0}$ small, there exists $\rho=\rho(b)$ such
that 
\[
\mu^{a}\int_{\Omega}b\varphi_{i}\varphi_{j}=\rho(b)\delta_{ij}.
\]
Then, in particular, we deduce that 
\[
\int_{\Omega}b\varphi_{1}\varphi_{2}=0\text{ and}\int_{\Omega}b\varphi_{1}^{2}=\int_{\Omega}b\varphi_{2}^{2}\text{ for all }b\in C^{0}.
\]
Thus $\varphi_{1}\varphi_{2}\equiv0$ and $\varphi_{1}^{2}\equiv\varphi_{2}^{2}$
almost everywhere in $\Omega$. Thus $\varphi_{1}\equiv\varphi_{2}\equiv0$
a.e. in $\Omega$, which leads us to a contradiction. Then there exists
$b\in C^{0}$ small such that the multiplicity of $\mu^{a+b}$ is
smaller that $\nu$. Since the eigenvalue $\mu^{a+b}$ depends continuosly
on $b$, given a neighborhood $U$ of $\mu^{a}$, for $\|b\|_{C^{0}}$
small we have that $\bar{U}\cap\sigma(E_{b})=\left\{ \mu_{1}^{a+b},\dots,\mu_{k}^{a+b}\right\} $
with $\nu_{i}$ the multiplicity of $\mu_{i}^{a+b}$, and where $\sum_{i=1}^{k}\nu_{i}=\nu$,
and $k>1$. Remebering the definition of $E_{b}$ and that $\mu^{a+b}=1/\lambda^{a+b}$
we have the claim.
\end{proof}
We proceed similarly as the proof of Theorem \ref{thm:main-0} to obtain Theorem  \ref{main}

\begin{lem}
\label{thm:itera-finita2}Given $a\in C^{0}(\mathbb{R}^{n})$ as in
the hypotesis of Theorem \ref{main}, and a sequence $\left\{ \sigma_{l}\right\} $
of positive real numbers there esists
\begin{itemize}
\item a sequence of functions $\left\{ b_{l}\right\} \in C^{0}(\mathbb{R}^{n})$
with $\|b_{l}\|_{C^{0}}\le\sigma_{l}$
\item a sequence of increasing integer numbers $\left\{ q_{l}\right\} $
with $q_{l}\nearrow+\infty$
\item a sequence of open bounded intervals $\left\{ U_{t}\right\} _{t=1,\dots,q_{l}}$
with $\bar{U}_{i}\cap\bar{U}_{j}=\emptyset$ for $i\neq j$
\end{itemize}
such that the eigenvalues $\lambda_{i}^{a+\sum_{j=i}^{l}b_{j}}$ of
the operator $(-\Delta)_{\Omega}^{s}+(a+\sum_{j=i}^{l}b_{j})I$ are
simple for $i=1,\dots,q_{l}$ and $\lambda_{i}^{a+\sum_{j=i}^{l}b_{j}}\in U_{i}$
for all $i=1,\dots,q_{l}$.
\end{lem}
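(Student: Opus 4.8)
The plan is to mimic closely the proof of Lemma \ref{thm:itera-finita}, replacing domain perturbations with perturbations of the zero-order coefficient $a$. We argue by induction on $l$, building the functions $b_l$, the integers $q_l$ and the intervals $U_t$ simultaneously. For the base case, choose $q_0\in\mathbb N$ so that $\lambda_1^a,\dots,\lambda_{q_0}^a$ are simple eigenvalues of $(-\Delta)_\Omega^s+aI$ (recall the first eigenvalue is always simple, so $q_0\ge 1$), and let $\lambda_{q_0+1}^a$ be the first eigenvalue with multiplicity $\nu_{q_0+1}>1$. Pick disjoint closed intervals $\bar U_1,\dots,\bar U_{q_0}$ with $\lambda_t^a\in U_t$, and an interval $W$ with $\bar W$ disjoint from all the $\bar U_t$ and $\bar W\cap\sigma((-\Delta)_\Omega^s+aI)=\{\lambda_{q_0+1}^a\}$.

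The inductive step is where the work concentrated in Section \ref{sec:Thm2} enters. Given the configuration at stage $l-1$, apply Corollary \ref{cor:auto-semplice} (with the coefficient $a+\sum_{j=1}^{l-1}b_j$ in place of $a$, and with $U=W$) to obtain a perturbation $\bar b$ with $\|\bar b\|_{C^0}$ as small as we like such that $\bar W\cap\sigma\big((-\Delta)_\Omega^s+(a+\sum_{j=1}^{l-1}b_j+\bar b)I\big)$ consists of $\nu_{q_{l-1}+1}$ \emph{simple} eigenvalues. Since the eigenvalues depend continuously on the coefficient, by choosing $\bar b$ small enough — shrinking $\bar b$ so that $\|\bar b\|_{C^0}\le\sigma_l$ and also so that the previously-simple eigenvalues $\lambda_t$, $t=1,\dots,q_{l-1}$, stay inside their intervals $U_t$ — we may set $b_l:=\bar b$. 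We then enlarge the list of intervals: the $\nu_{q_{l-1}+1}$ new simple eigenvalues produced inside $W$ are enclosed in $\nu_{q_{l-1}+1}$ pairwise-disjoint subintervals of $W$ (so still disjoint from the old $U_t$), and we continue past them, possibly picking up further already-simple eigenvalues, until we reach the next eigenvalue of multiplicity $>1$; this defines $q_l>q_{l-1}$ and the intervals $U_t$ for $t=q_{l-1}+1,\dots,q_l$. The condition $q_l\nearrow+\infty$ is automatic because at each stage $q_l$ strictly increases by at least the block of newly split eigenvalues.

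The one genuinely delicate point — which is also the main obstacle in the analogous Lemma \ref{thm:itera-finita} — is the bookkeeping showing that the perturbations $b_l$ can be taken small enough to \emph{preserve} all the simplicity already achieved at earlier stages, i.e. that the finitely many eigenvalues $\lambda_1,\dots,\lambda_{q_{l-1}}$ which are simple after stage $l-1$ remain simple and confined to their intervals $U_t$ after adding $b_l$. This follows from the fact that a simple eigenvalue stays simple under a sufficiently small $C^0$-perturbation of the coefficient (the multiplicity of an eigenvalue near a given value can only decrease under small perturbations, and it is already $1$), together with the continuous dependence of the finitely many relevant eigenvalues on $b$. Hence at stage $l$ we just impose the extra smallness constraint on $b_l$ beyond $\|b_l\|_{C^0}\le\sigma_l$, and the induction closes. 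The positivity hypothesis on $a$ (or smallness of $\|a\|_{C^0(\Omega)}$) guarantees throughout that $(-\Delta)_\Omega^s+(a+\sum_j b_j)I$ remains a positive operator with discrete spectrum, provided the $\sigma_l$ (hence the $b_l$) are small, which we may assume without loss of generality.
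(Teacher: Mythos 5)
Your proof is correct and follows essentially the same approach as the paper: iterate Corollary \ref{cor:auto-semplice} on the lowest multiple eigenvalue, using continuity of the eigenvalues in the $C^0$ topology to keep previously-split eigenvalues confined to their disjoint intervals, and repeat. Your write-up is actually somewhat more careful than the paper's: the paper slips and says it ``can choose a number $\sigma_{q+1}$ sufficiently small'' when in fact the $\sigma_l$ are prescribed in the hypothesis, so what is being shrunk is $\bar b$, not $\sigma_l$ — exactly as you phrase it — and you also make explicit the small-perturbation stability of the positivity assumption on the coefficient, which the paper leaves implicit.
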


\begin{proof}
Take $q\in\mathbb{N}$ such that that $\lambda_{1}^{a},\dots,\lambda_{q}^{a}$
are simple eigenvalues for $(-\Delta)_{\Omega}^{s}+aI$ and that $\lambda_{q+1}^{a}$
is the first eigenvalue with multiplicity $\nu_{q+1}$. For $t=1,\dots,q$
let $\left\{ U_{t}\right\} $ open intervals such that $\bar{U}_{i}\cap\bar{U}_{j}=\emptyset$
for $i\neq j$ and $\lambda_{t}^{a}\in U_{t}$. Let us take $W$ an
open interval such that $\bar{W}\cap\bar{U}_{t}=\emptyset$ for all
$t=1,\dots,q$ and $\bar{W}\cap\sigma((-\Delta)_{\Omega}^{s}+aI)=\left\{ \lambda_{q+1}^{a}\right\} $.
At this point, by Corollary \ref{cor:auto-semplice} we can choose
\textbf{$\bar{b}$ }such that $\bar{W}\cap\sigma((-\Delta)_{\Omega}^{s}+(a+\bar{b})I$
contains exactly $\nu_{q+1}$ simple eigenvalues. Also, we can choose
a number $\sigma_{q+1}$ sufficiently small, with $\|b_{q+1}\|_{C^{0}}\le\sigma_{q+1}$
so that $\lambda_{t}^{a+\bar{b}}\in U_{t}$ for all $t=1,\dots,q$,
since the eigenvalues depends continuosly on $b$. At this point,
by iterating this procedure a finite number of times we get the proof. 
\end{proof}
At this point we can conclude.
\begin{proof}[Proof of Theorem \ref{main}]
 Let us take a sequence $\left\{ \sigma_{l}\right\} $ with $0<\sigma_{l}<\frac{1}{2^{l}}$,
and a sequence $b_{l}$ associated to $\sigma_{l}$ as in the previous
theorem. By the choice of $\sigma_{l}$, we have that $\sum_{l}b_{l}$
converge to some function $b$ in $C^{0}(\mathbb{R}^{n})$. We claim
that all the eigenvalues $(-\Delta)_{\Omega}^{s}+(a+b)I$ are simple.
By contradiction, suppose that there exists a $\bar{q}$ such that
$\lambda_{\bar{q}}^{a+b}$ is the first multiple eigenvalue. By Theorem
\ref{thm:itera-finita} we have that $(-\Delta)_{\Omega}^{s}+(a+\sum_{l=1}^{\bar{q}+1}b_{l})I$
has the first $\bar{q}+1$ eigenvalues simple, and that there exists
$U_{1},\dots,U_{\bar{q}+1}$ open intervals, with disjoint closure,
such that $\lambda_{t}^{a+\sum_{l=1}^{\bar{q}+1}b_{l}}\in U_{t}$
for $t=1,\dots,\bar{q}+1$. On the one hand, $\lambda_{\bar{q}}^{a+\sum_{l=1}^{N}b_{l}}\rightarrow\lambda_{\bar{q}}^{a+b}$
as well as $\lambda_{\bar{q}+1}^{a+\sum_{l=1}^{N}b_{l}}\rightarrow\lambda_{\bar{q}}^{a+b}$
when $N\rightarrow\infty$ by continuity of the eigenvalues. On the
other and, $\lambda_{\bar{q}}^{a+\sum_{l=1}^{N}b_{l}}\in U_{\bar{q}}$
and $\lambda_{\bar{q}+1}^{a+\sum_{l=1}^{N}b_{l}}\in U_{\bar{q}+1}$
for all $N$, by Theorem \ref{thm:itera-finita}. So $\lambda_{\bar{q}}^{a+b}=\lambda_{\bar{q}+1}^{a+b}\in\bar{U}_{\bar{q}}\cap\bar{U}_{\bar{q}+1}$
which lead as to a contradiction, and the theorem is proved.
\end{proof}

\section{Sketch of the proof of Theorem \ref{main-1}.}\label{sec:Thm3}

In this section we adapt the abstract scheme to the last of the second
result of this paper. Since the proof is very similar to the one of Theorem \ref{main}, we provide only the main tools.

Since $\alpha>0$ on $\bar{\Omega}$, we endow the space $L^{2}(\Omega)$
with scalar product and norm given, respectively, by
\[
\langle u,v\rangle_{L^{2}}=\int_{\Omega}\alpha uv;\ \ \ \ \ \|u\|_{L^{2}}^{2}=\int_{\Omega}\alpha u^{2},
\]
 while on $\mathcal{H}_{0}^{s}$ we consider the usual scalar product
$\mathcal{E}(u,v)$. We consider the embedding $i:\mathcal{H}_{0}^{s}\rightarrow L^{2}$
and its adjoint operator $i^{*}:L^{2}\rightarrow\mathcal{H}_{0}^{s}$.
Then we have 
\[
\mathcal{E}((i^{*}\circ i)_{\alpha }v,u)=\int_{\Omega}\alpha uv\ \ \forall u,v\in\mathcal{H}_{0}^{s}.
\]
As before, the map $(i^{*}\circ i)_{\alpha }$ is selfadjoint, compact and
injective form $\mathcal{H}_{0}^{s}$ in itself. In addition, is $\varphi^{\alpha }$
is an eigenfunction associated to the eigenvalue $\mu^{\alpha }$ for $(i^{*}\circ i)_{\alpha }$,
then 
\[
 \mu^{a}(-\Delta)^{s}\varphi =\alpha (x)\varphi_{s}  \text{ in }\Omega,\
\varphi =0  \text{ in } \mathbb{R}^{n}\smallsetminus\Omega
,\]
thus $\lambda^{\alpha }=1/\mu^{\alpha }$ is an eigenvalue with $\varphi^{\alpha }$
as eigenvector for Problem (\ref{eq:Pb-1}). 

We want to prove that there exists $\beta\in C^{0}(\Omega)$, with $\|\beta\|_{L^{\infty}}$
sufficiently small, such that $(i^{*}\circ i)_{\alpha+\beta}$ has all eigenvalues
simple.

Set 
\[
E_{\beta}:=(i^{*}\circ i)_{\alpha +\beta},
\]
we have the following Lemma
\begin{lem}
The map $\beta\mapsto E_{\beta}$ from a neighborhood of $0$ in $C^{0}(\Omega)$
to the space of linear maps from $\mathcal{H}_{0}^{s}(\Omega)$ to
$\mathcal{H}_{0}^{s}(\Omega)$ is continuous and differentiable at
$0$ and it holds 
$$\mathcal{E}(E'(0)[\beta ]u,v)=\int_{\Omega}\beta uv.$$
\end{lem}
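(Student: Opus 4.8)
The plan is to mimic the structure of Lemma~\ref{lem:Bprimo} and the surrounding computations in Section~\ref{sec:Thm2}, transporting them to the weighted setting introduced here. First I would fix $u,v\in\mathcal H_0^s(\Omega)$ and write down explicitly what $E_\beta u$ is: by definition of the adjoint with respect to the fixed scalar product $\mathcal E$, the element $w_\beta:=E_\beta u=(i^*\circ i)_{\alpha+\beta}u\in\mathcal H_0^s(\Omega)$ is characterized by
\[
\mathcal E(w_\beta,\varphi)=\int_\Omega(\alpha+\beta)u\varphi\qquad\text{for all }\varphi\in\mathcal H_0^s(\Omega).
\]
Subtracting the same identity at $\beta=0$ gives $\mathcal E(w_\beta-w_0,\varphi)=\int_\Omega\beta u\varphi$ for all $\varphi$, i.e. $E_\beta u-E_0u=(i^*\circ i)_{\text{(const }1)}(\beta u)$ in the appropriate sense; in particular $w_\beta-w_0$ depends linearly on $\beta$. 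This already shows that $\beta\mapsto E_\beta u$ is affine, hence smooth, and that the derivative is given by the linear map $E'(0)[\beta]u:=w_\beta-w_0$, which is exactly the element of $\mathcal H_0^s(\Omega)$ satisfying $\mathcal E(E'(0)[\beta]u,v)=\int_\Omega\beta uv$. This is the stated formula.

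Next I would verify continuity and Fréchet differentiability at $0$ rigorously, since the statement asks for it. Continuity: from $\mathcal E(E_\beta u-E_0u,\varphi)=\int_\Omega\beta u\varphi$, choosing $\varphi=E_\beta u-E_0u$ and using $\|\cdot\|_{L^2}\le C\|\cdot\|_{\mathcal H_0^s}$ (the compact, hence bounded, embedding $i$) together with Hölder, one gets $\|E_\beta u-E_0u\|_{\mathcal H_0^s}^2\le\|\beta\|_{L^\infty}\|u\|_{L^2}\|E_\beta u-E_0u\|_{L^2}\le C\|\beta\|_{L^\infty}\|u\|_{\mathcal H_0^s}\|E_\beta u-E_0u\|_{\mathcal H_0^s}$, so $\|E_\beta-E_0\|_{L(\mathcal H_0^s,\mathcal H_0^s)}\le C\|\beta\|_{L^\infty}\to0$. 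Differentiability at $0$ is even easier: since $\beta\mapsto E_\beta$ is affine, the remainder $E_\beta-E_0-E'(0)[\beta]$ is identically zero, so the map is trivially Fréchet differentiable at $0$ with the derivative described above. One should note that $E'(0)[\beta]$ is a bounded operator on $\mathcal H_0^s(\Omega)$ by the same Hölder-plus-embedding estimate. I would also record, as in Remark~\ref{rem:spezz}, the consequence that if $\varphi_1^\alpha,\dots,\varphi_\nu^\alpha$ are eigenvectors of $E_0$ for a multiple eigenvalue $\mu^\alpha$, then $\mathcal E(E'(0)[\beta]E_0\varphi_i^\alpha,\varphi_j^\alpha)=\mu^\alpha\int_\Omega\beta E_0(\varphi_i^\alpha)\varphi_j^\alpha$, which is what feeds the splitting argument via Theorem~\ref{thm:astratto}.

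I do not expect any real obstacle here: the only subtlety is being careful that the two scalar products in play are different —the weighted one $\langle u,v\rangle_{L^2}=\int_\Omega\alpha uv$ on $L^2(\Omega)$ and the fixed one $\mathcal E$ on $\mathcal H_0^s(\Omega)$— and that the \emph{adjoint} $i^*$ is taken with respect to $\mathcal E$ on the target and the weighted inner product on the source, so that $\mathcal E((i^*\circ i)_\alpha v,u)=\int_\Omega\alpha uv$, whereas the perturbation $\beta$ enters only through $\int_\Omega\beta uv$ (unweighted, because $(\alpha+\beta)-\alpha=\beta$). Getting this bookkeeping right is the entire content; once it is set up, the computation is a line. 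The rest of the proof of Theorem~\ref{main-1} then proceeds exactly as for Theorem~\ref{main}: one derives from condition~\eqref{eq:spezzamentoastratto} that $\int_\Omega\beta\varphi_1\varphi_2=0$ and $\int_\Omega\beta\varphi_1^2=\int_\Omega\beta\varphi_2^2$ for all $\beta\in C^0$, hence $\varphi_1\varphi_2\equiv0$ and $\varphi_1^2\equiv\varphi_2^2$ a.e., forcing $\varphi_1\equiv\varphi_2\equiv0$, a contradiction, and then one iterates over finitely many eigenvalues and passes to the limit as in the proof of Theorem~\ref{thm:main-0}.
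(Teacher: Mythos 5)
Your proposal is correct and follows essentially the same route as the paper's proof: write down the defining identity $\mathcal E(E_\beta u,\varphi)=\int_\Omega(\alpha+\beta)u\varphi$, subtract the $\beta=0$ version to get $\mathcal E((E_\beta-E_0)u,\varphi)=\int_\Omega\beta u\varphi$, prove continuity by testing with $\varphi=(E_\beta-E_0)u$ and applying H\"older plus the embedding $\mathcal H_0^s\hookrightarrow L^2$, and then observe that the remainder $E_\beta-E_0-E'(0)[\beta]$ vanishes identically (your ``affine'' remark is just the explicit form of the paper's computation showing $\mathcal E(E_\beta u-E_0 u-L(\beta,u),w)\equiv 0$). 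One small slip in your side remark: since $\mathcal E(E'(0)[\beta]u,v)=\int_\Omega\beta uv$, taking $u=E_0\varphi_i^\alpha=\mu^\alpha\varphi_i^\alpha$ gives $\mathcal E(E'(0)[\beta]E_0\varphi_i^\alpha,\varphi_j^\alpha)=\int_\Omega\beta\,E_0(\varphi_i^\alpha)\,\varphi_j^\alpha=\mu^\alpha\int_\Omega\beta\varphi_i^\alpha\varphi_j^\alpha$, without the extra prefactor $\mu^\alpha$ you placed in front of $\int_\Omega\beta E_0(\varphi_i^\alpha)\varphi_j^\alpha$; this does not affect the splitting argument, which only uses $\int_\Omega\beta\varphi_i\varphi_j=\rho\delta_{ij}$.
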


\begin{proof}
Since $\Lambda_{1}\int_{\Omega}u^{2}\le\mathcal{E}(u,u)$, and $\Lambda_{1}>0$,
where $\Lambda_{1}$ is the first eigenvalue of $(-\Delta)^{s}$,
we have $\|E_{\beta}u\|_{L^{2}}\le c\|u\|_{L^{2}}$. Indeed
\[
\Lambda_{1}\int_{\Omega}\left(E_{\beta}u\right)^{2}\le\mathcal{E}(E_{\beta}u,E_{\beta}u)=\int_{\Omega}(\alpha+\beta)uE_{b}u\le c\|u\|_{L^{2}}\|E_{\beta}u\|_{L^{2}}.
\]
We can show now that $\mathcal{E}\left((E_{\beta}-E_{0})u,(E_{\beta}-E_{0})u\right)\rightarrow0$
as $\|\beta\|_{L^{\infty}}\rightarrow0$, proving the continuity of $\beta\rightarrow E_{\beta}$
at $b=0$, in fact $\mathcal{E}\left((E_{\beta}-E_{0})u,w\right)=\int_{\Omega}\beta uw$,
so 
\[
\mathcal{E}\left((E_{\beta}-E_{0})u,(E_{\beta}-E_{0})u\right)=\int_{\Omega}\beta u(E_{\beta}-E_{0})u\le c\|\beta\|_{L^{\infty}}\|u\|_{L^{2}}\mathcal{E}\left((E_{\beta}-E_{0})u,(E_{\beta}-E_{0})u\right)^{\frac{1}{2}}
\]
which proves the claim.

Finally, given $\beta\in C^{0}(\Omega)$ 
and $u\in\mathcal{H}_{0}^{s}$, there exists $L(\beta,u)\in\mathcal{H}_{0}^{s}$
such that
\[
\int_{\Omega}\beta uw=\mathcal{E}\left(L(\beta ,u),w\right).
\]
Thus, for any $w\in\mathcal{H}_{0}^{s}$ it holds
\[
\mathcal{E}\left(\left(E_{\beta}u-E_{0}u-L(\beta,u)\right),w\right)=\int_{\Omega}(\alpha+\beta)uw-\int_{\Omega}\alpha uw-\int_{\Omega}\beta uw\equiv0.
\]
Thus $L(\beta,u)=E'(0)[\beta]u$ and $\mathcal{E}(E'(0)[\beta]u,v)=\int_{\Omega}\beta uv$,
as claimed.
\end{proof}
It remains to us to apply Theorem \ref{thm:astratto} to conclude
the proof of Theorem \ref{main-1}.
\begin{proof}[Proof of Theorem \ref{main-1}.]
 If $\mu^{\alpha}$ is an eigenvalue of multiplicity $\nu>1$ of the operator
$(i^{*}\circ i)_{\alpha}=E_{0}$ and $\varphi_{1}^{\alpha},\dots,\varphi_{\nu}^{\alpha}$
are orthonormal eigenfunctions associated to $\mu^{\alpha}$, the condition
of non splitting is that for any $b$ with $\|\beta\|_{C^{0}}$ small
there exists $\rho=\rho(\beta)\in\mathbb{R}$ such that

\[
\int_{\Omega}\beta\varphi_{i}\varphi_{j}=\rho\delta_{ij},\text{ for all }i,j=1,\dots,\nu.
\]
At this point, the proof can be achieved as the proof of Theorem \ref{main}.
\end{proof}

\section{Appendix}
\begin{proof}[Proof of Lemma \ref{lem:varieta}]
It is known that the Fredholm operator of a given index is open in
$L(X,X)$. So, if $A_{0}\in F_{ij}$, then $A_{0}+H\in F_{ij}$ (if
$H$ is small) if and only if $\dim\left(\ker(A_{0}+H)\right)=\dim\left(\ker(A_{0})\right)$,
that is, if there exists $j$ linearly independent solutions of $(A_{0}+H)x=0$.
By means of the projections $P,Q,\bar{P},\bar{Q}$, this is equivalent
to solve 
\begin{equation}
\left\{ \begin{array}{l}
\bar{P}Hx=0\\
\bar{Q}A_{0}x+\bar{Q}Hx=0
\end{array}\right.;\label{eq:sys}
\end{equation}
Furthermore by Lemma \ref{lem:deco}, we can decompose $H=Y+S+Z+T$
where $Y=\bar{P}HP$, $S=\bar{Q}HP$, $Z=\bar{P}HQ$ and $T=\bar{Q}HQ$.
Set $x=u+v$ where $u\in\ker A_{0}$ and $v\in\mathcal{V}$, we can
recast (\ref{eq:sys}) as
\begin{equation}
\left\{ \begin{array}{l}
Yu+Zv=0\\
\bar{Q}A_{0}v+Su+Tv=0
\end{array}\right..\label{eq:sys-1}
\end{equation}
Now, $\bar{Q}A_{0}:\mathcal{V}\rightarrow\mathrm{Im}A$ is invertible,
and let us call $R$ its inverse. Then the second equation of (\ref{eq:sys-1})
becomes 
\[
v=-RSu-RTv.
\]
If $H$ is sufficiently small, then the operator $w\mapsto-RSu-RTw$
is a contraction from $\mathcal{V}$ to $\mathcal{V}$. Then we can
find $v$ as 
\[
v=-RSu-\sum_{i=0}^{\infty}(-1)^{i}\left(RT\right)^{i}RSu.
\]
Plugging this expression in (\ref{eq:sys-1}) we obtain
\[
\left[Y+Z\left(-RS-\sum_{i=0}^{\infty}(-1)^{i}\left(RT\right)^{i}RS\right)\right]u=0.
\]
Recalling that $u\in\ker A_{0}$, we have that this equation has $j$
linearly independent solutions if and only if 
\[
Y=Z\left(RS+\sum_{i=0}^{\infty}(-1)^{i}\left(RT\right)^{i}RS\right).
\]
Then, when $H$ is small, the set $\left\{ A_{0}+H\in F_{ij}\right\} $
is a graph of an analytic function with domain $\mathcal{V}$, and
the claim follows easily.
\end{proof}

\end{document}